\documentclass[11pt,a4paper]{article}

\usepackage[margin=23mm]{geometry}
\usepackage{amsmath,amssymb,amsthm,mathtools}
\usepackage{enumitem}
\usepackage{hyperref}
\hypersetup{
	pdftitle={A Near-Optimal Linear Range for the Erdos Matching Conjecture},
	pdfauthor={Mengyu Cao, Hong Liu, and Haixiang Zhang},
	pdfsubject={A near-optimal linear range for the Erdos matching conjecture},
	pdfkeywords={Erdos matching conjecture, uniform hypergraph, matching, stability, fractional vertex cover, small deviations},
	colorlinks=true,
	linkcolor=blue,
	citecolor=blue,
	urlcolor=blue
}
\usepackage{microtype}
\usepackage{authblk}
\usepackage{aliascnt}
\usepackage[noabbrev]{cleveref}
\allowdisplaybreaks

\newcommand{\R}{\mathbb R}
\newcommand{\E}{\mathbb E}
\newcommand{\Prob}{\mathbb P}
\newcommand{\1}{\mathbf 1}
\newcommand{\cA}{\mathcal A}
\newcommand{\cB}{\mathcal B}
\newcommand{\cF}{\mathcal F}
\newcommand{\cG}{\mathcal G}
\newcommand{\cH}{\mathcal H}
\newcommand{\cJ}{\mathcal J}
\newcommand{\cM}{\mathcal M}
\newcommand{\eps}{\varepsilon}
\newcommand{\dd}{\,\mathrm d}
\newcommand{\Dir}{\operatorname{Dir}}
\newcommand{\vol}{\operatorname{vol}}

\newtheorem{theorem}{Theorem}[section]

\newaliascnt{lemma}{theorem}
\newtheorem{lemma}[lemma]{Lemma}
\aliascntresetthe{lemma}

\newaliascnt{corollary}{theorem}
\newtheorem{corollary}[corollary]{Corollary}
\aliascntresetthe{corollary}

\newaliascnt{conjecture}{theorem}
\newtheorem{conjecture}[conjecture]{Conjecture}
\aliascntresetthe{conjecture}

\theoremstyle{remark}
\newaliascnt{remark}{theorem}
\newtheorem{remark}[remark]{Remark}
\aliascntresetthe{remark}

\crefname{theorem}{Theorem}{Theorems}
\Crefname{theorem}{Theorem}{Theorems}
\crefname{lemma}{Lemma}{Lemmas}
\Crefname{lemma}{Lemma}{Lemmas}
\crefname{corollary}{Corollary}{Corollaries}
\Crefname{corollary}{Corollary}{Corollaries}
\crefname{conjecture}{Conjecture}{Conjectures}
\Crefname{conjecture}{Conjecture}{Conjectures}
\crefname{remark}{Remark}{Remarks}
\Crefname{remark}{Remark}{Remarks}

\title{A Near-Optimal Linear Range for the Erd\H{o}s Matching Conjecture}
\author[1]{Mengyu Cao\thanks{E-mail: \texttt{myucao@ruc.edu.cn}. Supported by the National Natural Science Foundation of China (12301431) and Beijing Natural Science Foundation (1262010).}}
\author[2]{Hong Liu\thanks{E-mail: \texttt{hongliu@ibs.re.kr}. Supported by the Institute for Basic Science under grant IBS-R029-C4.}}
\author[3]{Haixiang Zhang\thanks{Corresponding author. E-mail: \texttt{zhang-hx22@mails.tsinghua.edu.cn}.}}

\affil[1]{\small Institute for Mathematical Sciences, Renmin University of China, Beijing 100086, China}
\affil[2]{\small Extremal Combinatorics and Probability Group, Institute for Basic Science, Daejeon, South Korea}
\affil[3]{\small Department of Mathematical Sciences, Tsinghua University, Beijing 100084, China}

\date{}

\begin{document}
	\maketitle
	
		\begin{abstract}
		The Erd\H{o}s Matching Conjecture is governed by two competing ways of
		excluding $s+1$ disjoint edges: one may concentrate all edges on fewer than
		$k(s+1)$ vertices, or force every edge to meet a fixed $s$-set.  We
		determine a near-optimal range in which the second construction is extremal.
		For every fixed $k\ge2$, there is $s_0(k)$ such that, whenever $s\ge s_0(k)$
		and $n\ge(k+1)s$, every $\cF\subseteq\binom{[n]}k$ with $\nu(\cF)\le s$
		satisfies
		\[
		|\cF|\le\binom nk-\binom{n-s}k,
		\]
		with equality only for the family of all $k$-sets meeting a fixed $s$-set.
		This improves the best previous general linear coefficient
		from $(5k-2)/3$ to $k+1$.  In particular, the parameterized form of our argument further lowers the
		coefficient to $k+0.6$ for $k\ge5$.  Since the two conjectured constructions exchange asymptotic
		dominance at $n=(\rho_k+o(1))s$ for a coefficient $\rho_k\in(k,k+1)$, our
		range lies less than one unit above the unavoidable barrier.  We also prove a
		stability theorem showing that cover families are the only near-extremal
		configurations throughout this range.
		A key ingredient in our proof is a probabilistic rigidity statement which forces near-extremal fractional covers
        to be almost integral.
	\end{abstract}
	
	\section{Introduction}
	
	The Erd\H{o}s Matching Conjecture is a prototype for a phase transition between
	distinct extremal mechanisms.  A $k$-uniform hypergraph can avoid $s+1$
	pairwise disjoint edges in two fundamentally different ways: it can concentrate
	all of its edges on too few vertices, or it can impose a small transversal that
	meets every edge.  The first obstruction is local and clique-like, whereas the
	second is global and cover-like.  As the ratio $n/s$ varies, these constructions
	exchange dominance.  Locating that transition requires more than comparing their
	sizes: one must rule out intermediate configurations that behave fractionally
	like a cover without possessing an actual small transversal.  This is why the
	problem lies naturally at the intersection of extremal set theory, linear
	programming duality, and sharp probability inequalities.
	
	For a finite set $V$, write $\binom Vk$ for the collection of its $k$-element
	subsets, and put $[n]=\{1,\ldots,n\}$.  A family
	$\cF\subseteq\binom{[n]}k$ is a $k$-uniform hypergraph, or a $k$-graph.  A
	\emph{matching} is a collection of pairwise disjoint edges, and $\nu(\cF)$
	denotes the maximum size of a matching in $\cF$.  For $U\subseteq[n]$, let
	$\cF[U]=\cF\cap\binom Uk$, and write $\cF\mathbin\triangle\cG$ for symmetric
	difference.
	
	When $n<k(s+1)$, the complete $k$-graph has matching number at most $s$, so the
	problem becomes nontrivial only for $n\ge k(s+1)$.   In that range the
    conjecture compares the following two constructions.  The first is the clique family
	\[
	\cB_k(s):=\binom{[k(s+1)-1]}k.
	\]
	For $S\subseteq V$, the second is the cover family
	\[
	\cA_V(S):=\left\{E\in\binom Vk:E\cap S\ne\varnothing\right\}.
	\]
	When $V=[n]$, we write $\cA(S)=\cA_{[n]}(S)$.  If $|S|=s$, then
$	|\cA(S)|=M_k(n,s):=\binom nk-\binom{n-s}k.$
	Both constructions have matching number at most $s$: the first uses fewer than
	$k(s+1)$ vertices, while pairwise disjoint members of the second must meet $S$
	in distinct vertices.  Erd\H{o}s conjectured in 1965 that one of these two
	constructions is always extremal \cite{Erdos1965}.
	
	\begin{conjecture}\label{conj:emc}
		Let $n,k,s$ be positive integers with $n\ge k(s+1)$.  If
		$\cF\subseteq\binom{[n]}k$ and $\nu(\cF)\le s$, then
		\[
		|\cF|\le
		\max\left\{\binom{k(s+1)-1}k,
		\binom nk-\binom{n-s}k\right\}.
		\]
	\end{conjecture}
	
	Despite its elementary formulation, the conjecture remains open in general for
	$k\ge5$.  The graph case $k=2$ is the Erd\H{o}s--Gallai matching theorem
	\cite{ErdosGallai1959}, and the case $k=3$ was settled through work of Frankl,
	R\"odl and Ruci\'nski and of Frankl
	\cite{FranklRodlRucinski2012,Frankl2017Triple}.  For arbitrary uniformity, results for $n$ sufficiently large relative to $s$ go back to Bollob\'as, Daykin and Erd\H{o}s
	\cite{BollobasDaykinErdos1976}.  On the cover side, Frankl proved the conjectured
	bound $M_k(n,s)$ for $n\ge(2k-1)s+k$ \cite{Frankl2013}; Frankl and Kupavskii
	then reduced the coefficient, for sufficiently large $s$, to
	\[
	 n\ge \frac{5k-2}{3}s.
	\]
	This is the strongest previously known theorem that applies uniformly to every
	fixed $k$ \cite{FranklKupavskii2022}.  Complementary progress closer to the
	clique range was obtained by Frankl and by Kolupaev and Kupavskii
	\cite{Frankl2017Range,KolupaevKupavskii2023}.  In uniformity four, Frankl, Lu,
	Ma and Wu proved the conjecture for $n\ge5s$ in the large-parameter regime
	\cite{FranklLuMaWu2026}, and Hou, Hu and Liu subsequently proved the full
	conjecture for every $s\ge6961$ \cite{HouHuLiu2026}.
	
	Our main theorem reduces the general cover-side coefficient from $(5k-2)/3$ to
	$k+1$.
	
	\begin{theorem}\label{thm:main}
		For every fixed integer $k\ge2$, there exists $s_0=s_0(k)$ such that the following
		holds.  If $s\ge s_0$, $n\ge(k+1)s$, and
		$\cF\subseteq\binom{[n]}k$ satisfies $\nu(\cF)\le s$, then
		\[
		|\cF|\le M_k(n,s)=\binom nk-\binom{n-s}k.
		\]
		Equality holds if and only if $\cF=\cA(S)$ for some
		$S\in\binom{[n]}s$.
	\end{theorem}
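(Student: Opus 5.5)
We follow the standard two-stage scheme: first a stability theorem placing every near-extremal family close to a cover family $\cA(S)$, then an exact argument removing the remaining error. Throughout we work with shifted families. Shifting preserves $|\cF|$ and does not increase $\nu(\cF)$, so for the bound we may assume $\cF$ is shifted. The uniqueness of the extremal family is recovered at the end by the usual argument that a family whose shift is $\cA([s])$ must itself be of the form $\cA(S)$.

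\emph{Stage 1: fractional covers and rigidity.} Let $\nu^*$ denote the fractional matching number and $\tau^*$ the fractional cover number, so $\nu^*=\tau^*$ by LP duality. We use Frankl's averaging over random partitions of $[n]$, or Frankl--Kupavskii's concentration argument, to show the following. Either $|\cF|$ is smaller than $M_k(n,s)$ by a margin, or $\cF$ admits a fractional cover $w:[n]\to[0,1]$ of total weight about $s$ such that almost every $k$-set $E$ has $w(E)\ge1$.

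Counting edges of $\cF$ then reduces to bounding
\[
\Prob_{E}\Big[\textstyle\sum_{v\in E}w(v)\ge1\Big]
\]
for a uniformly random $k$-set $E$. For an integral $w=\1_S$ this probability equals $M_k(n,s)/\binom nk$. The key rigidity lemma states that, for $n\ge(k+1)s$ and total weight $s$, this probability is maximized only by integral $w$. Moreover, any $w$ whose probability is within $\eps\binom nk$ of the maximum must be $\delta$-close to some $\1_S$. We would prove this by viewing $\sum_{v\in E}w(v)$ as a sum of $k$ samples without replacement and applying a small-deviation (Feige/Samuels-type) inequality for $\Prob[X\ge \E X + (1-ks/n)]$. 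The threshold $n\ge(k+1)s$ is exactly what keeps the mean $ks/n\le k/(k+1)$ below $1$ by a margin in which such inequalities are sharp. This step is the main obstacle: the deviation inequality must be sharp at coefficient $k+1$, not merely at $(5k-2)/3$. Our first attempt would be a smoothing argument that moves mass between two fractional vertices while showing the probability does not decrease. This reduces the extremal weights to a few levels, and those cases we analyze by direct binomial estimates.

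\emph{Stage 2: from stability to exactness.} Given that $\cF$ is $o(n^k)$-close to $\cA(S)$ with $|S|=s$, we run a standard cleaning argument. Let $S'\subseteq S$ be the vertices of large degree, $|S'|=s-t$. Delete $S'$ and consider $\cF'=\cF[[n]\setminus S']$, which has $\nu(\cF')\le t$ on $n-s+t$ vertices.

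If $t=0$, then $\cF\subseteq\cA(S)$ directly.

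If $t\ge1$, we bound $|\cF'|$ by Frankl's theorem, which is applicable since $n-s+t\ge ks+t\gg(2k-1)t$. Each low-degree vertex of $S$ loses on the order of $\binom{n}{k-1}$ edges compared with $\cA(S)$. Compared with the $O(t\binom{n}{k-1})$ of missing edges, the gain is too small to compensate, and this yields strict inequality unless $\cF=\cA(S)$.
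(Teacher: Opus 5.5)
Your Stage 2 follows the paper's exactification in outline. Stage 1, however, has a genuine gap at both of its steps, and those two steps are where the paper's real work lies.

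\textbf{Step 1: from $\nu(\cF)\le s$ to an approximate cover.} You give no mechanism that turns $\nu(\cF)\le s$ into a weight $w$ of total weight $(x+o(1))n$ covering all but $o(n^k)$ \emph{edges of $\cF$}. (It cannot be ``almost every $k$-set'', which no cover of weight about $s$ achieves.) You do not explain how Frankl's averaging or the Frankl--Kupavskii argument would output such a cover; those methods bound $|\cF|$ directly, in their own ranges. Nor can LP duality be applied to $\cF$ itself, because $\nu(\cF)\le s$ does not imply $\nu^*(\cF)\le s$: already $\nu^*\bigl(\binom{[k(s+1)-1]}{k}\bigr)=s+1-1/k$. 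The paper gets the cover from weak hypergraph regularity. A fractional matching of the reduced $k$-graph of size more than $(x+a)t$ rounds greedily, inside dense $\eps$-regular cells, to a genuine matching of size greater than $xn$. Hence the reduced graph has a fractional cover of weight at most $(x+a)t$. This lifts to $[n]$ and misses only non-crossing edges and edges in irregular or sparse cells.

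\textbf{Step 2: the rigidity lemma.} This is asserted, not proved, and your sketch would not prove it. Replace a random $k$-set by $k$ independent samples from the empirical law of $w$, and write $Y_i=Z_i/x$, $T=1/x$. You then need that i.i.d.\ $Z_i\in[0,1]$ with $\E Z_i\le x\le1/(k+1)$ satisfy $\Prob(Z_1+\cdots+Z_k\ge1)\le1-(1-x)^k$, with equality \emph{only} for $(1-x)\delta_0+x\delta_1$. Weak compactness of the empirical laws plus the Portmanteau theorem then turns uniqueness into $\delta$-closeness to some $\1_S$.

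Your proposed tools do not reach this statement:
\begin{itemize}
\item A sharp small-deviation inequality gives at most the numerical bound, not the equality case.
\item A two-level smoothing move must use $x\le1/(k+1)$ essentially, since for $1/\rho_k<x\le1/k$ the law $(1-kx)\delta_0+kx\delta_{1/k}$ beats the Bernoulli law. You do not say how the threshold enters.
\item The standard reductions to few levels (fixing all but one coordinate, or writing each law as a mixture of two-point laws) destroy identical distribution. They leave independent two-point variables with different supports, which is a Samuels-type problem rather than a binomial computation.
\end{itemize}
The paper handles exactly this situation in three moves:
\begin{enumerate}
\item Vlassis--Thomas chain domination bounds the tail of such a system by $K_r(y)=\Prob_D(\sum_iy_iD_i\le1)$, for some $y$ with $\sum_iy_i\ge T$ and $D$ uniform on a simplex.
\item A cap estimate from Brunn--Minkowski and Gr\"unbaum's centroid inequality bounds $K_r(y)$. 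The hypothesis $T\ge k+1$ is exactly what puts the cutting hyperplane between the centroid and the vertex where the linear form vanishes. The equality case of Gr\"unbaum forces $y$ to have a single nonzero coordinate.
\item An induction on the number of variables finishes the equality characterization.
\end{enumerate}

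\textbf{Stage 2.} The bound is immediate by adding $|\cA(S')|$ to the sparse bound for $\cF'$. First you must justify $\nu(\cF')\le t$ by greedy extension, with ``large degree'' quantified as fewer than about $\binom{s/2}{k-1}$ missing link sets in $[n]\setminus S$. Your ``gain too small to compensate'' claim is false: $\cA(S'')$ with $S''\ne S$ gains exactly what it loses. So the equality case needs a uniqueness theorem for the sparse problem (the paper uses Bollob\'as--Daykin--Erd\H{o}s). It yields $\cF=\cA(S'')$ for some $s$-set $S''$, not necessarily $\cA(S)$.
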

	
	\cref{thm:main} reduces the
	additive gap between the known general range and the unavoidable transition from
	order $k$ to less than one, i.e.~the range
	$n\ge(k+1)s$ is near-optimal. Indeed, let $\rho_k$ be the unique real number satisfying
$	\rho_k^k-(\rho_k-1)^k=k^k.$
	The left-hand side is strictly increasing, and its values at $k$ and $k+1$ lie
	respectively below and above $k^k$, so
	\[
	k<\rho_k<k+1.
	\]
	Comparing the leading terms of $|\cB_k(s)|$ and $M_k(n,s)$ shows that the two
	constructions exchange dominance at
$	 n=\rho_k s+o(s).$
	Consequently, no theorem asserting that the cover family is extremal can hold
	asymptotically below the coefficient $\rho_k$.

	\begin{remark}\label{rem:known-cases}
		For $k=2$ and $k=3$, stronger complete results are known
		\cite{ErdosGallai1959,FranklRodlRucinski2012,Frankl2017Triple}.  For $k=4$,
		\cref{thm:main} recovers the $n\ge5s$ conclusion of Frankl, Lu, Ma and Wu,
		whereas the theorem of Hou, Hu and Liu covers the full admissible range for
		$s\ge6961$.  Thus the genuinely new uniformity-specific cases begin at $k=5$,
		although the method is uniform in $k$.
	\end{remark}
	
	The transition coefficient $\rho_k$ also has an exact probabilistic meaning.
	Suppose that $w:[n]\to[0,1]$ is a fractional vertex cover of average weight $x$,
	and let $Z$ be the weight of a uniformly chosen vertex.  The density of the
	$k$-sets certified by $w$ is controlled by
	\[
	\Prob(Z_1+\cdots+Z_k\ge1),
	\]
	where $Z_1,\ldots,Z_k$ are independent copies of $Z$.  The genuine cover
	construction corresponds to the Bernoulli law
	\[
	(1-x)\delta_0+x\delta_1,
	\]
	whose tail probability is
$	q_k(x):=1-(1-x)^k.$
	The clique construction corresponds asymptotically to the fractional model
	\[
	(1-kx)\delta_0+kx\delta_{1/k},
	\]
	whose tail probability is $(kx)^k$.  These two probabilities are equal precisely
	at $x=1/\rho_k$.  Thus the extremal phase transition is already present in the
	one-dimensional optimization problem for the law of a fractional cover.

	The proof can be formulated for any real number $f_k>k$ for which the
	following probabilistic statement holds: for every $0<x\le1/f_k$ and every
	collection of independent, identically distributed random variables
	$Z_1,\ldots,Z_k\in[0,1]$ with
	$\E Z_i\le x$,
	\[
	\Prob(Z_1+\cdots+Z_k\ge1)\le q_k(x),
	\]
	with equality if and only if their common law is
	$\mu_x=(1-x)\delta_0+x\delta_1$.  We prove this statement for
	$f_k=k+1$, while \cref{conj:optimal-tail} would imply it for every
	$f_k>\rho_k$.
	
	This is not merely an analogy.  The assertion that the Bernoulli law maximizes
	the tail probability for $x<1/\rho_k$ is exactly the cover-dominant part of the
	i.i.d. maximal-tail conjecture of \L{}uczak, Mieczkowska and \v{S}ileikis
	\cite{LuczakMieczkowskaSileikis2017}; their full conjecture is equivalent to the
	asymptotic fractional Erd\H{o}s matching conjecture.  We prove the equality-rigid
	statement needed in the range $x\le1/(k+1)$.  As explained in
	\cref{sec:optimal-conjecture}, extending it to every $x<1/\rho_k$ would improve
	\cref{thm:main} to the asymptotically optimal range
	$n\ge(\rho_k+\eps)s$ for every fixed $\eps>0$.

	The fractional-matching and probabilistic components of this approach have
	important antecedents. Alon, Huang and Sudakov
	\cite{AlonHuangSudakov2012} connected fractional covers with
	probabilities, and Alon, Frankl, Huang, R\"odl, Ruci\'nski and Sudakov \cite{AlonEtAl2012} developed this viewpoint systematically for hypergraph matching thresholds. In their minimum-degree setting, they transferred asymptotic fractional thresholds to integral ones by random sampling and sparsification.  They also made explicit the reduction relevant here: if an
	$k$-graph has fractional matching number less than $xm$, linear-programming duality supplies a fractional vertex cover $w$ of total weight less than $xm$; for the empirical law of the weights $w(v)$, the probability $\Prob(Z_1+\cdots+Z_\ell<1)$ measures, up to the negligible error, a family of nonedges. Thus a lower bound for this probability gives an upper bound for the hypergraph.  The direct precursor to our stability framework is the recent work of Frankl, Lu, Ma and Wu \cite{FranklLuMaWu2026}.  For $4$-graphs they first proved a fractional stability theorem, converted it into stability for ordinary matchings using the sampling mechanism of \cite{AlonEtAl2012}, and then upgraded
	stability to the exact Erd\H{o}s matching conjecture for $n\ge5s$ by a local matching argument.  We follow this broad philosophy, but provide a parameterized implementation for every fixed $k$: weak regularity turns the ordinary matching constraint into an approximate fractional cover, an inequality and compactness force its empirical law to approach the Bernoulli cover law, and an equality-sensitive local argument gives the unique exact extremal family.  Moreover, the parameter $f_k$ is preserved throughout the proof, so any improvement of the probability inequality transfers directly to the same Erd\H{o}s matching conjecture coefficient.
	
	A sharp density estimate alone is insufficient for the exact theorem.  We need
	to know that a family whose size is close to $M_k(n,s)$ actually resembles the
	cover construction.  The structural core of the paper is therefore the following
	stability theorem, uniform throughout every compact subinterval of the
	cover-dominant range.
	
	\begin{theorem}\label{thm:stability}
		Fix $k\ge2$, $0<a\le1/(k+1)$, and $\eta>0$.  There exist $\eps>0$ and $n_0$ such
		that the following holds whenever $n\ge n_0$ and
$		an\le s\le\frac{n}{k+1}.$
		If $\cF\subseteq\binom{[n]}k$, $\nu(\cF)\le s$, and
		\[
		|\cF|\ge M_k(n,s)-\eps n^k,
		\]
		then some $S\in\binom{[n]}s$ satisfies
		\[
		|\cF\mathbin\triangle\cA(S)|\le\eta n^k.
		\]
	\end{theorem}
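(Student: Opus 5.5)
The proof goes by contradiction and compactness, following the philosophy described in the introduction: weak regularity, then a fractional cover, then a rigid probabilistic inequality, then structure. Suppose the statement fails for some fixed $k,a,\eta$. Then there are sequences $n\to\infty$, $s\in[an,n/(k+1)]$, $\eps\to0$ and families $\cF$ with $\nu(\cF)\le s$ and $|\cF|\ge M_k(n,s)-\eps n^k$, yet with $|\cF\mathbin\triangle\cA(S)|>\eta n^k$ for every $S$.

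\textbf{Step 1: from matchings to an approximate fractional cover.} Apply the weak hypergraph regularity lemma with a small parameter $\gamma$. This partitions $[n]$ into $t=O_\gamma(1)$ clusters $V_1,\dots,V_t$ of equal size. Form the reduced $k$-graph $R$ on $[t]$ (with repeated indices allowed) whose edges are the $\gamma$-regular $k$-tuples of clusters with density at least $\beta$. A fractional matching of $R$ of total weight $y$ can be realized greedily, cluster by cluster, inside the regular tuples. This gives an integral matching of $\cF$ of size about $y\,n/t$. Hence $\nu^*(R)\le(s/n+o(1))t$. LP duality then supplies a fractional cover $w:[t]\to[0,1]$ of total weight at most $(x+o(1))t$, where $x=s/n\le1/(k+1)$. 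Let $Z$ be the weight of a uniformly random cluster. Up to $O(\gamma+\beta)n^k$ edges from irregular or sparse tuples, every edge of $\cF$ lies in a tuple $(i_1,\dots,i_k)$ with $w(i_1)+\dots+w(i_k)\ge1$. Therefore
\[
|\cF|\le\bigl(\Prob(Z_1+\dots+Z_k\ge1)+o(1)\bigr)\binom nk .
\]

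\textbf{Step 2: rigidity via compactness.} Pass to a subsequence so that $x\to x_0\in[a,1/(k+1)]$ and the empirical law of $Z$ converges weakly to some law $\mu$ on $[0,1]$ with mean at most $x_0$. Since $M_k(n,s)/\binom nk\to q_k(x_0)$, the near-extremality gives $\limsup\Prob(Z_1+\cdots+Z_k\ge1)\ge q_k(x_0)$.

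The event $\{\sum Z_i\ge1\}$ is closed, so its probability is upper semicontinuous under weak convergence. We therefore get $\Prob_\mu(\sum Z_i\ge1)\ge q_k(x_0)$. The equality case of the probabilistic inequality for $f_k=k+1$, stated in the introduction as proved in the paper, forces $\mu=\mu_{x_0}$.

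This is the step I expect to be the main obstacle. We need not only the equality characterization but a quantitative robustness statement. For large $n$, the empirical law must have all but $o(1)$ of its mass within $o(1)$ of $\{0,1\}$. The mass near $1$ must be $x_0+o(1)$, and the mass sitting exactly at $1$ must not be needed. The compactness argument gives this, provided the inequality holds with a strict gap uniformly over mean $\le x$ for all $x\le1/(k+1)$, including equality behaviour at the endpoint. In particular we must check that laws with mean strictly below $x_0$ lose density, which follows since $q_k$ is strictly increasing.

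\textbf{Step 3: recovering the set $S$.} Let $T$ be the union of clusters with $w\ge1-\delta$. Then $|T|=(x_0+o(1))n$, and the clusters with $w\le\delta$ cover $(1-x_0-o(1))n$ vertices. A $k$-tuple meeting no cluster of $T$ has weight sum at most $k\delta+$ (the contribution of the $o(1)$-mass intermediate clusters), which is less than $1$. So all but $o(n^k)$ edges of $\cF$ meet $T$, i.e.\ $|\cF\setminus\cA(T)|=o(n^k)$.

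Adjust $T$ by $o(n)$ vertices to obtain $S$ with $|S|=s$; this changes $\cA(\cdot)$ by only $o(n^k)$ sets. Then $|\cF\setminus\cA(S)|=o(n^k)$. Combined with $|\cF|\ge|\cA(S)|-o(n^k)$, this yields $|\cA(S)\setminus\cF|=o(n^k)$. Hence $|\cF\mathbin\triangle\cA(S)|=o(n^k)<\eta n^k$, contradicting the choice of the sequence.

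Uniformity in $s\in[an,n/(k+1)]$ comes for free from the compactness over $x_0$. The lower bound $a>0$ is needed so that $x_0>0$ and the regularity errors are negligible relative to $q_k(x_0)$.
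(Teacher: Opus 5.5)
Your proposal is correct and follows essentially the paper's route (the paper proves \cref{thm:parameterized-stability} and specializes to $f_k=k+1$): weak regularity with greedy rounding of a reduced fractional matching, then LP duality to an approximate fractional cover, then Portmanteau compactness plus the equality case of \cref{cor:threshold-rigidity} to force the empirical weight law to $\mu_{x_0}$, and finally the set of heavy vertices as the approximate centre. Two details need tidying. First, drop tuples with repeated clusters from $R$, since the regularity lemma says nothing about them, and take $t\ge t_0$ large so that edges with two vertices in one cluster number only $o(n^k)$; edges meeting $V_0$ or an intermediate-weight cluster are likewise only $o(n^k)$, which is the correct reading of your Step~3 remark. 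Second, your finish via $|\cF\setminus\cA(S)|=o(n^k)$ together with $|\cF|\ge|\cA(S)|-o(n^k)$ is a slightly shorter path to $|\cF\mathbin\triangle\cA(S)|=o(n^k)$ than the paper's detour through $|\cF_j\mathbin\triangle\cG_j|$.
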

	
	The proof must overcome three separate losses, and each requires a different
	rigidity mechanism.
	
	\begin{enumerate}[label=\textup{(\arabic*)},leftmargin=8mm]
		\item \emph{From an integral matching bound to an approximate fractional
		cover.}  The natural linear-programming route is blocked by the fact that
		$\nu(\cF)\le s$ does not imply $\nu^*(\cF)\le s$.  We apply weak hypergraph
		regularity and pass to a bounded reduced $k$-graph.  Any sufficiently large
		fractional matching in the reduced graph can be rounded to an
		ordinary matching in $\cF$.  Duality in the reduced graph therefore yields a
		weight function $w:[n]\to[0,1]$ of average at most $x+o(1)$ that covers all
		but $o(n^k)$ edges of $\cF$.  This is the step that reconciles the ordinary
		matching problem with its fractional relaxation.
		
		\item \emph{From a sharp inequality to structural rigidity.}  Applied to the
		empirical law of $w$, a tail inequality can bound $|\cF|$, but the numerical
		bound alone gives no information about near extremizers.  We prove the
		equality-rigid statement
		\[
		\Prob(Z_1+\cdots+Z_k\ge1)\le1-(1-x)^k,
		\]
		for i.i.d. $Z_i\in[0,1]$ with $\E Z_i\le x\le1/(k+1)$, with equality only for
		$(1-x)\delta_0+x\delta_1$.  The numerical inequality also follows from the
		recent small-deviation theorem of Fu et al.~\cite{FuEtAl2026}; the new point
		needed here is the equality classification and its compactness consequence.
		Our equality analysis combines a reduction with a direct
		simplex cap estimate: equality in Gr\"unbaum's inequality
		\cite[Section~1]{TanganelliCastrillon2025} forces the relevant
		cap to point in a single coordinate direction, which ultimately singles out
		the Bernoulli law.  Compactness then forces $w$ to be almost $0$--$1$, proving
		\cref{thm:stability}.
		
		\item \emph{From stability to an exact extremal theorem.}  A routine removal argument cannot eliminate the $o(n^k)$ error from stability to get the exact statement. Instead, starting from
		an approximate center $S$, we move the few vertices with many missing link
		edges into the outside part.  A sparse matching theorem, including
		its equality case, determines the induced local family.  A greedy extension
		then shows that any unresolved excess would create a matching of size $s+1$.
		This equality-sensitive local step converts the stable cover into an exact one.
	\end{enumerate}
	
	The first two steps prove stability in the parameterized form stated as
	\cref{thm:parameterized-stability}; the third bootstraps it to the exact result.  The compactness and
	regularity arguments are qualitative, so the resulting value of $s_0(k)$ is not
	explicit.  \Cref{sec:probability} proves the probabilistic rigidity,
	\cref{sec:regularity} constructs the approximate fractional cover,
	\cref{sec:stability-proof} proves stability, and \cref{sec:exactification}
	completes the exact theorem.  The final section isolates the maximal-tail
	statement that would reach the asymptotically optimal barrier $\rho_k$, and
	records how Ling's recent theorem lowers the coefficient supplied by our
	self-contained argument from $k+1$ to $k+0.6$ for $k\ge5$.

    \section{Probabilistic rigidity}\label{sec:probability}
	
	We isolate the analytic statement needed later.  For $r\ge1$ and $T\ge r+1$, set
	\[
	q_{r,T}:=1-\left(1-\frac1T\right)^r.
	\]
	For $a\in\R$, let $\delta_a$ denote the unit point mass at $a$.
	For $r\ge1$, let
	\[
	\Delta_r:=\left\{d=(d_0,\ldots,d_r)\in[0,1]^{r+1}:
	\sum_{i=0}^r d_i=1\right\}.
	\]
	The notation $D=(D_0,\ldots,D_r)\sim\Dir(1,\ldots,1)$ means that $D$ is
	uniformly distributed on $\Delta_r$ with respect to normalized
	$r$-dimensional affine Lebesgue measure; $\Prob_D$ denotes probability under
	this law.
	
	The proof has three parts.  We first record a cap-volume lemma and the one
	probabilistic result from \cite{VlassisThomas2026}.  We then prove the required
	simplex estimate, including its equality case, directly from classical convex
	geometry.  Finally, we derive the sharp tail inequality, characterize all equality cases, and
	extract the compactness consequence used in the stability argument.
	
	\subsection{Geometric and probabilistic preliminaries}
	
	We first record the cap-volume consequence of the Brunn--Minkowski
	inequality needed for the direct simplex estimate.  Here a convex body in $\mathbb{R}^r$ means a compact convex set with
    nonempty interior, and $\vol$ denotes $r$-dimensional Lebesgue measure. If $C\subseteq\R^r$ is a
	convex body and $\xi\ne0$, put
	\[
	V_{C,\xi}(t):=
	\frac{\vol\bigl(C\cap\{z:\langle z,\xi\rangle\ge t\}\bigr)}{\vol(C)}.
	\]
	
	\begin{lemma}\label{lem:cap-concavity}
		The function $V_{C,\xi}^{\,1/r}$ is concave on the support of
		$V_{C,\xi}$.
	\end{lemma}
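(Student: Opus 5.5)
The plan is to write the cap volume as an integral of cross-sectional volumes and apply Brunn--Minkowski to the superlevel body. Fix $C$ and $\xi$. Let $m:=\min_{z\in C}\langle z,\xi\rangle$ and $M:=\max_{z\in C}\langle z,\xi\rangle$. Then $V_{C,\xi}(t)=1$ for $t\le m$ and $V_{C,\xi}(t)=0$ for $t>M$, so the support is $(-\infty,M]$. On $(-\infty,m]$ the function $V_{C,\xi}^{1/r}$ is constant, and it is nonincreasing everywhere. Hence it suffices to prove concavity on $[m,M]$. Constant-then-concave-nonincreasing glues to a concave function, provided the left derivative at $m$ is at most the right one; this holds automatically because the function equals $1$ on the left and is nonincreasing on the right.

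For $t\in[m,M]$, set $C_t:=C\cap\{z:\langle z,\xi\rangle\ge t\}$, which is a compact convex set. The key inclusion is
\[
C_{(1-\lambda)t_0+\lambda t_1}\supseteq (1-\lambda)C_{t_0}+\lambda C_{t_1}\qquad (t_0,t_1\in[m,M],\ \lambda\in[0,1]).
\]
It follows from convexity of $C$ and linearity of $\langle\cdot,\xi\rangle$: if $z_0\in C_{t_0}$ and $z_1\in C_{t_1}$, then $(1-\lambda)z_0+\lambda z_1\in C$ and its $\xi$-value is at least $(1-\lambda)t_0+\lambda t_1$. The Brunn--Minkowski inequality $\vol(A+B)^{1/r}\ge\vol(A)^{1/r}+\vol(B)^{1/r}$ for nonempty compact sets then gives
\[
\vol(C_{(1-\lambda)t_0+\lambda t_1})^{1/r}\ge(1-\lambda)\vol(C_{t_0})^{1/r}+\lambda\vol(C_{t_1})^{1/r}.
\]
Dividing by $\vol(C)^{1/r}$ yields concavity of $V_{C,\xi}^{1/r}$ on $[m,M]$.

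The only delicate point is the endpoint $t=M$, where $C_M$ may be lower dimensional with volume $0$. Brunn--Minkowski for nonempty compact sets still applies there, since it requires no positive volume, so no separate limiting argument is needed. I do not expect a genuine obstacle: the statement is essentially the standard consequence of Brunn--Minkowski, with the small bookkeeping of extending concavity from $[m,M]$ to the full support.
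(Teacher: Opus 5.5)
Your proof is correct and is essentially the paper's argument: the inclusion $(1-\lambda)C_{t_0}+\lambda C_{t_1}\subseteq C_{(1-\lambda)t_0+\lambda t_1}$ from convexity, followed by Brunn--Minkowski. Where the paper works in the interior of the support and extends to the endpoints by continuity, you use the compact-set form of Brunn--Minkowski at $t=M$ instead.

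One minor slip: in your gluing remark, concavity needs the left derivative at $m$ to be at least the right one, not at most; your justification in fact shows the correct direction. The gluing step is also unnecessary, since the same inclusion and Brunn--Minkowski apply verbatim to all $t_0,t_1\le M$.
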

	
	\begin{proof}
		For $t_1,t_2$ in the interior of the support and $0\le\lambda\le1$, let
		\[
		C_i:=C\cap\{z:\langle z,\xi\rangle\ge t_i\},\qquad i=1,2.
		\]
		Convexity of $C$ gives
		\begin{align*}
			\lambda C_1+(1-\lambda)C_2
			\subseteq
			C\cap\{z:\langle z,\xi\rangle\ge\lambda t_1+(1-\lambda)t_2\}.
		\end{align*}
		The Brunn--Minkowski inequality \cite[Eq.~(2)]{Gardner2002}, in the form
		\[
		\vol\bigl(\lambda C_1+(1-\lambda)C_2\bigr)^{1/r}
		\ge \lambda\vol(C_1)^{1/r}+(1-\lambda)\vol(C_2)^{1/r},
		\]
		together with the preceding inclusion and division by $\vol(C)^{1/r}$,
		proves the asserted concavity.  The statement at the endpoints follows by
		continuity.
	\end{proof}
	
	We next record the chain-domination statement extracted from the proof of
	Vlassis and Thomas.  Let
	$(D_0,D_1,\dots,D_r)\sim\Dir(1,\dots,1)$ and define
	\[
	K_r(y):=\Prob_D\!\left(\sum_{i=1}^r y_iD_i\le1\right),
	\qquad y\in[0,\infty)^r.
	\]
	Suppose that independent mean-one variables $Y_i$ have nondegenerate two-point
	laws $Y_i\in\{\ell_i,h_i\}$, where $0\le\ell_i<1<h_i$.  Encode an outcome by its
	high set $A\subseteq[r]$, and write $y(A)_i=h_i$ for $i\in A$ and
	$y(A)_i=\ell_i$ otherwise.  Denote the product law of the high set by $\pi$.
	Thus, writing $p_i=(1-\ell_i)/(h_i-\ell_i)$,
	\[
	\pi(A)=\prod_{i\in A}p_i\prod_{i\notin A}(1-p_i).
	\]
	We call $H:2^{[r]}\to\R$ \emph{increasing} if $H(A)\le H(B)$ whenever
	$A\subseteq B$.
	
	\begin{theorem}[Vlassis--Thomas \cite{VlassisThomas2026}]\label{thm:chain}
		For every increasing function $H:2^{[r]}\to\R$, there is a maximal chain
		\[
		\mathcal C:\quad
		\varnothing=A_0\subset A_1\subset\cdots\subset A_r=[r]
		\]
		such that
		\[
		\E_\pi H\le \E_{\nu_{\mathcal C}}H.
		\]
		Here the chain measure is defined by
		\begin{align*}
			\nu_{\mathcal C}(A_j)
			&=K_r(y(A_j))-K_r(y(A_{j+1})) &&(0\le j<r),\\
			\nu_{\mathcal C}(A_r)&=K_r(y(A_r)).
		\end{align*}
		Moreover, every mean-one law on $[0,\infty)$ is a probability mixture of
		$\delta_1$ and mean-one two-point laws supported on
		$\{\ell,h\}$ with $0\le\ell<1<h$.
	\end{theorem}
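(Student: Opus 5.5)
We argue by reduction to extreme points of a convex set. Both sides of the chain inequality are linear in $H$, and the increasing functions on $2^{[r]}$ form a convex cone. This cone is generated, up to adding constants, by indicator functions of up-sets, i.e.\ $H=\1_{\cU}$ with $\cU$ an up-set. The catch is that the chain $\mathcal C$ is allowed to depend on $H$. So we cannot simply check generators and then sum: we must produce, for each $H$ directly, a single maximal chain that works.

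The plan is a greedy construction. Write $H=\sum_j c_j\1_{\cU_j}$ with $c_j\ge0$ and $\cU_1\supseteq\cU_2\supseteq\cdots$ a nested family of up-sets (the level sets of $H$), plus a constant. For a chain $\mathcal C$, the quantity $\E_{\nu_{\mathcal C}}\1_{\cU_j}$ is obtained by telescoping. It equals $K_r(y(A_{m_j}))$, where $m_j$ is the first index with $A_{m_j}\in\cU_j$; this uses that $\cU_j$ is an up-set, so the chain enters it and stays. Since $K_r$ is decreasing in $y$, we have $K_r(y(A))$ decreasing along the chain. It therefore suffices to find one chain along which every level set is entered as early as possible in the appropriate weighted sense.

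We then prove the key single up-set inequality: for every up-set $\cU$,
\[
\pi(\cU)\le \max_{A\in\operatorname{min}\cU}K_r(y(A)).
\]
The natural route is a Dirichlet representation. We have $K_r(y)=\Prob_D(\sum y_iD_i\le1)$, and the mean-one condition $\E Y_i=1$ gives $\E_\pi\sum_i Y_iD_i=\sum_i D_i=1-D_0\le1$. Coupling $D$ with an independent high set drawn from $\pi$, a Markov or averaging argument over the random weights $D$ should show that $\pi(\cU)$ is dominated by the probability that some minimal element of $\cU$ is ``affordable'' under budget $1$.

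Given this inequality, we combine the level sets by choosing the chain greedily from the top level down, adjusting through nestedness. We pick $A_{m}$ as a maximiser for the smallest up-set, and extend downward and upward by maximisers restricted to sets comparable with those already chosen. This is a standard exchange argument in the spirit of the Vlassis--Thomas proof.

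The mixture statement is separate and classical. A mean-one law $\mu$ on $[0,\infty)$ can be decomposed by pairing mass below $1$ with mass above $1$ in proportions $(h-1):(1-\ell)$. Formally, we take the joint density proportional to $(h-\ell)\,\mu|_{[0,1)}(d\ell)\,\mu|_{(1,\infty)}(dh)$, normalised using $\int(1-\ell)\,d\mu=\int(h-1)\,d\mu$, and put the remaining atom at $1$ into $\delta_1$. Checking the marginals is direct.

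The main obstacle is the single up-set inequality together with making one chain serve all level sets simultaneously. If the greedy compatibility fails, the fallback is to follow \cite{VlassisThomas2026} directly. Their proof constructs the chain through an ordering of coordinates by the realised Dirichlet weights: on the event that $D$ induces an ordering, the sets $A_j$ are initial segments sorted by $D_i(h_i-\ell_i)$. Averaging over this random chain gives a mixture of chain measures that dominates $\E_\pi H$, and by linearity some deterministic chain in the mixture attains at least the average.
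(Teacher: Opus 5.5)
Your proposal has a genuine gap: the chain-domination inequality, which is the whole content of the first assertion, is never proved. The paper does not reprove it either. It takes it from the induction proving Proposition~3 of \cite{VlassisThomas2026}, with Lemma~6 there applied at every stage to an arbitrary increasing payoff.

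Several of your preliminary steps are correct:
\begin{itemize}
\item the telescoping identity $\E_{\nu_{\mathcal C}}\1_{\mathcal U}=K_r(y(A_m))$, when the chain first enters the up-set $\mathcal U$ at $A_m$;
\item $K_r(y(\varnothing))=1$, because all $\ell_i<1$, so constants cancel;
\item for a single up-set, the statement is equivalent to $\pi(\mathcal U)\le\max\{K_r(y(A)):A\text{ minimal in }\mathcal U\}$.
\end{itemize}
That last inequality is not routine. Already for $\mathcal U=\{A:\sum_iy(A)_i\ge T\}$ it yields, via \cref{lem:simplex-rigidity}, the sharp small-deviation bound of \cref{thm:tail-rigidity} for two-point laws.

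Your only support is that ``a Markov or averaging argument \dots\ should show'' a bound by the probability that some minimal element is affordable, i.e.\ by $\Prob_D\bigl(\sum_iy(A)_iD_i\le1\text{ for some minimal }A\in\mathcal U\bigr)$. That is a union over minimal elements, which dominates the maximum. So even if proved, it would not give what you need. Moreover, the target has no slack. For $\ell_i=0$, $h_i=T$ and $\mathcal U=\{A:A\ne\varnothing\}$ one has $\pi(\mathcal U)=q_{r,T}=K_r(y(\{i\}))$. For $r=2$, $T=3$, the union probability is $\Prob_D(\min\{D_1,D_2\}\le1/3)=8/9>5/9$.

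The passage from up-sets to a general increasing $H$ is also unsupported. Bounds for the level sets $\mathcal U_j$ obtained with different chains do not combine, because $\sum_jc_j\max_{\mathcal C}\E_{\nu_{\mathcal C}}\1_{\mathcal U_j}\ge\max_{\mathcal C}\sum_jc_j\E_{\nu_{\mathcal C}}\1_{\mathcal U_j}$ points the wrong way. In your greedy step, nothing prevents the best minimal element of $\mathcal U_j$ comparable with the sets already chosen from having $K_r$-value below $\pi(\mathcal U_j)$. The ``standard exchange argument'' is exactly what would have to be supplied. The cited proof avoids this altogether: its stagewise lemma is stated for an arbitrary increasing payoff, so a single chain is produced without decomposing $H$. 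For the paper's only use of the theorem, $H(A)=\1\{\sum_iy(A)_i\ge T\}$, one up-set suffices, so this second gap would be harmless there; the first would not.

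Your fallback (a random chain ordered by $D_i(h_i-\ell_i)$) is a conjecture about the cited proof rather than an argument. Its final step, that some deterministic chain attains the average, is valid only after a dominating mixture of chain measures has been exhibited.

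The mixture assertion, by contrast, is correct and complete. Let $c=\int_{[0,1)}(1-\ell)\,\mu(\mathrm d\ell)=\int_{(1,\infty)}(h-1)\,\mu(\mathrm dh)$, which is positive unless $\mu=\delta_1$. The weights $c^{-1}(h-\ell)\,\mu(\mathrm d\ell)\,\mu(\mathrm dh)$ on $[0,1)\times(1,\infty)$ have total mass $1-\mu(\{1\})$ and reproduce $\mu$ away from the atom at $1$. This is the Lemma~7 that the paper cites.
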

	
	The first assertion is obtained by applying Lemma~6 of
	\cite{VlassisThomas2026} at every stage of the induction used to prove its
	Proposition~3; that lemma applies to an arbitrary increasing payoff.  The masses are
	nonnegative and telescope to $K_r(y(\varnothing))=1$.  The second assertion is
	Lemma~7 of that paper.
	
	\begin{remark}\label{rem:external-results}
		The numerical inequality in \cref{thm:tail-rigidity} below is the
		$\delta\ge1$ specialization of \cite[Theorem~1.1]{FuEtAl2026}, with
		$T=r+\delta$, after taking complements.  Indeed, if $S=\sum_iY_i$ and
		$\E S\le r$, then $\{S<\E S+\delta\}\subseteq\{S<T\}$, while
		\cite[Theorem~1.1]{FuEtAl2026} gives
		$\Prob(S<\E S+\delta)\ge(1-1/T)^r$.  That result therefore supplies the
		numerical inequality, but it does not characterize its equality cases, which are
		needed for our stability argument.  We prove both the inequality and the equality
		characterization in this regime without using the proof in \cite{FuEtAl2026}.
		The result from a 2026 preprint used in our proof is the chain-domination and
		mixture statement in \cref{thm:chain}; the source mapping is given immediately
		after that theorem.  The geometric estimate is derived from
		\cref{lem:cap-concavity} and Gr\"unbaum's inequality
		\cite[Theorem~2]{Grunbaum1960}, and every equality step used
		below is included in the proof.
	\end{remark}
	
	\subsection{A direct simplex rigidity estimate}
	
	We next derive the precise simplex estimate needed below from
	\cref{lem:cap-concavity} and Gr\"unbaum's inequality
	\cite[Theorem~2]{Grunbaum1960}.  We use
	the following equality statement \cite[Section~1]{TanganelliCastrillon2025}.  A
	cone means a set $\operatorname{conv}(B\cup\{a\})$, where
	$\operatorname{conv}$ denotes convex hull and $B$ is a compact
	$(r-1)$-dimensional convex set contained in a hyperplane and $a$ lies outside that
	hyperplane; $B$ and $a$ are called its base and apex.  If a halfspace through the
	centroid of a convex body attains the lower bound $(r/(r+1))^r$, then the body is
	such a cone, the cutting hyperplane is parallel to its base, and the halfspace of
	measure $(r/(r+1))^r$ contains the apex.
	
	\begin{lemma}\label{lem:simplex-rigidity}
		Let $r\ge2$, $T\ge r+1$, and $y\in[0,\infty)^r$ with
		$\sum_i y_i\ge T$.  Then
		\[
		K_r(y)\le q_{r,T}.
		\]
		Equality holds if and only if there is an index $j\in[r]$ such that
		$y_j=T$ and $y_i=0$ for every $i\ne j$.
	\end{lemma}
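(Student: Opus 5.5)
We view $K_r(y)$ as the normalized volume of a sublevel set of a linear functional on the simplex and bound the complementary cap from below. We realize $\Delta_r$ as an $r$-dimensional convex body $C$ with normalized uniform measure. We take the functional $L(d)=\sum_{i=1}^r y_i d_i$, so $y_0=0$. The case $y=0$ is excluded by $\sum_i y_i\ge T>0$. Since $\{L=1\}$ is a null set, $K_r(y)=1-V(1)$, where $V(t):=\Prob_D(L(D)\ge t)$ is the cap function $V_{C,\xi}$ of \cref{lem:cap-concavity}. It therefore suffices to show $V(1)\ge(1-1/T)^r$ and to characterize equality.

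\emph{Inequality.} The minimum of $L$ on $\Delta_r$ is $0$, attained at the vertex $e_0$, so $V(0)=1$. The centroid $g$ of $\Delta_r$ has all coordinates $1/(r+1)$. Hence $c:=L(g)=\Sigma/(r+1)$ with $\Sigma:=\sum_i y_i\ge T\ge r+1$, so $c\ge1$. Gr\"unbaum's inequality \cite[Theorem~2]{Grunbaum1960}, applied to the halfspace $\{L\ge c\}$ through the centroid, gives $V(c)\ge(r/(r+1))^r$. Both $0$ and $c$ lie in the support of $V$, and $1=(1-\lambda)\cdot0+\lambda c$ with $\lambda=1/c\in(0,1]$. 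Concavity of $V^{1/r}$ from \cref{lem:cap-concavity} then yields
\[
V(1)^{1/r}\ge(1-\lambda)+\lambda\frac{r}{r+1}=1-\frac{1}{c(r+1)}=1-\frac1\Sigma\ge1-\frac1T .
\]
Raising to the $r$-th power gives $K_r(y)\le q_{r,T}$.

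\emph{Equality, sufficiency.} For $y=Te_j$ we have $K_r(y)=\Prob(D_j\le1/T)$. Since $D_j\sim\mathrm{Beta}(1,r)$, this equals $1-(1-1/T)^r$, so equality holds.

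\emph{Equality, necessity.} This is the step I expect to need the most care. If equality holds, every inequality in the chain is tight. In particular $\Sigma=T$, and, because $\lambda>0$, also $V(c)^{1/r}=r/(r+1)$, which is equality in Gr\"unbaum's inequality. By the equality statement of \cite[Section~1]{TanganelliCastrillon2025}, $\Delta_r$ is then a cone $\operatorname{conv}(B\cup\{a\})$ whose base $B$ is parallel to the hyperplane $\{L=c\}$, and the apex $a$ lies in $\{L\ge c\}$.

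The point is to identify this cone structure. The extreme points of $\operatorname{conv}(B\cup\{a\})$ are $a$ together with extreme points of $B$. Hence $a$ is a vertex $e_j$ of $\Delta_r$, and the remaining $r$ vertices lie in the hyperplane of $B$, so $B$ is the opposite facet $\{d_j=0\}$. Parallelism means that $L$ is constant on that facet, i.e.\ $y_i$ is the same for all $i\ne j$ with $i\in\{0,\ldots,r\}$. Since $y_0=0$, this forces $y_i=0$ for every $i\ne j$.

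It remains to rule out the degenerate choice $j=0$. That would make $y=0$, contradicting $\Sigma=T>0$. So $j\in[r]$, and the apex condition is consistent with $L(e_j)=y_j>0$. Finally $\Sigma=T$ gives $y_j=T$. Continuity at the endpoints of the support in \cref{lem:cap-concavity} covers the boundary case $c=1$, which occurs when $\Sigma=r+1=T$.
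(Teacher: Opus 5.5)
Your inequality argument is the paper's: concavity of the cap function from Brunn--Minkowski, Gr\"unbaum at the centroid level $c=\Sigma/(r+1)$, and $V=1$ at the minimum level $L=0$. The only difference is that you interpolate directly at $L=1$ and get $(1-1/\Sigma)^r$. The paper instead first shrinks the event to $\{L\ge\Sigma/T\}$ and interpolates there. Your version is slightly cleaner, because $\Sigma=T$ falls out of the equality chain rather than being recovered afterwards from $1-(1-1/\Sigma)^r\le q_{r,T}$. Your closing remark about continuity at $c=1$ is unnecessary: there $\lambda=1$ and the concavity step is an identity.

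The equality case, however, contains a false step. When the apex is $e_j$, constancy of $L$ on the opposite facet says that $y_i$ takes a common value for all $i\in\{0,\dots,r\}\setminus\{j\}$. Using $y_0=0$ to conclude that this value is $0$ is valid only when $j\ne0$. For $j=0$ the condition is $y_1=\cdots=y_r$ (so $y_i=T/r$), not $y=0$. Your stated reason for excluding $j=0$ (``that would make $y=0$'') is therefore wrong.

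This case cannot be excluded by the cone-and-parallel-base conclusion alone. $\Delta_r$ genuinely is a cone with apex $e_0$ over the facet $\{d_0=0\}$, and that facet is parallel to $\{L=c\}$. What rules it out is the apex-side part of the Tanganelli Castrill\'on equality statement, which you recorded but did not use. The apex must lie in $\{L\ge c\}$, whereas $L(e_0)=0<1\le c$. Equivalently, for $y=(T/r,\dots,T/r)$ one computes $V(c)=\Prob(D_0\le 1/(r+1))=1-(r/(r+1))^r>(r/(r+1))^r$ for $r\ge2$, so Gr\"unbaum is not tight for $\{L\ge c\}$. This is exactly how the paper excludes the zeroth vertex, and with that replacement your proof is complete.
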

	
	\begin{proof}
		Let $g=(1/(r+1),\ldots,1/(r+1))$ be the centroid of $\Delta_r$.
		Write $L(d)=\sum_{i=1}^r y_id_i$ and $S=\sum_{i=1}^r y_i$.  Since $S\ge T>0$,
		$L$ is nonconstant on $\Delta_r$, and each of its level sets has zero
		$r$-dimensional simplex volume.  As $S/T\ge1$,
		\begin{equation}\label{eq:simplex-inclusion}
			1-K_r(y)=\Prob_D(L(D)\ge1)
			\ge \Prob_D(L(D)\ge S/T).
		\end{equation}
		Consider the centered simplex $C=\Delta_r-g$ in its $r$-dimensional linear span,
		which we identify isometrically with $\R^r$.
		Let $\xi$ be the orthogonal projection onto that span of
		$(0,y_1,\ldots,y_r)$.  This projection is nonzero because $L$ is nonconstant.
		If $z=d-g$, then
		\[
		\langle z,\xi\rangle=L(d)-\frac{S}{r+1},
		\qquad
		\max_{z\in C}\langle z,-\xi\rangle=\frac{S}{r+1};
		\]
		the displayed maximum is attained at the vertex $(1,0,\ldots,0)$.  Consequently, the
		last event in \eqref{eq:simplex-inclusion} is
		\[
		\left\{z\in C:\langle z,\xi\rangle\ge
		\alpha\frac{S}{r+1}\right\},
		\qquad
		\alpha:=\frac{r+1-T}{T}\in(-1,0].
		\]
		Indeed,
		\[
		\frac{S}{T}-\frac{S}{r+1}
		=\frac{r+1-T}{T}\frac{S}{r+1}.
		\]
		Let $V(t)=V_{C,\xi}(t)$ and put $h=S/(r+1)$.  Then $V(-h)=1$.
		Gr\"unbaum's centroid inequality \cite[Theorem~2]{Grunbaum1960} states
		that every halfspace whose boundary passes through the centroid of an
		$r$-dimensional convex body
		contains at least a fraction $(r/(r+1))^r$ of its volume.  Hence
		$V(0)\ge(r/(r+1))^r$, while \cref{lem:cap-concavity} and
		$\alpha h=(-\alpha)(-h)+(1+\alpha)0$ give
		\begin{align*}
			V(\alpha h)^{1/r}
			&\ge -\alpha V(-h)^{1/r}+(1+\alpha)V(0)^{1/r}\\
			&\ge -\alpha+(1+\alpha)\frac r{r+1}
			=1-\frac1T.
		\end{align*}
		Thus $\Prob_D(L(D)\ge S/T)=V(\alpha h)\ge(1-1/T)^r$.
		This proves the inequality.
		
		Suppose equality holds.  Then equality holds both in
		\eqref{eq:simplex-inclusion} and in Gr\"unbaum's inequality, since
		$1+\alpha>0$.  By the equality statement recalled above from
		\cite[Section~1]{TanganelliCastrillon2025}, the simplex is a cone whose base is
		parallel to a level set of $L$ and whose apex lies in the halfspace where $L$
		is larger.  Because the
		simplex is the convex hull of this apex and a set in a parallel supporting
		hyperplane, the apex is one vertex and all the remaining vertices lie in that
		hyperplane.  Thus the base is a facet and the apex is its unique opposite vertex.
		The values of $L$ at
		the vertices of $\Delta_r$ are
		\[
		0,y_1,\dots,y_r.
		\]
		The zeroth vertex cannot be the upper apex: its value is $0$, all other vertex
		values are nonnegative, and at least one is positive.  If the apex is the $j$th
		vertex, $L$ is constant on the opposite facet.  That facet contains the zeroth
		vertex and every vertex except the $j$th, so $y_i=0$ for every $i\ne j$.
		Thus $y_j=S\ge T$.  In this case
		\[
		K_r(y)=\Prob(D_j\le1/S)=1-\left(1-\frac1S\right)^r
		\le q_{r,T},
		\]
		with equality only when $S=T$.  Hence $y_j=T$.
		
		Conversely, if $y_j=T$ and $y_i=0$ for $i\ne j$, then
		\[
		K_r(y)=\Prob(D_j\le1/T)=1-\left(1-\frac1T\right)^r=q_{r,T},
		\]
		where the middle identity is the one-dimensional marginal formula for the uniform
		distribution on $\Delta_r$.
	\end{proof}
	
	For $r=1$, the tail inequality and equality characterization in the next theorem
	are exactly Markov's inequality and its equality conditions.
	
	\subsection{Tail rigidity and compactness}
	
	\begin{theorem}\label{thm:tail-rigidity}
		Let $r\ge1$ and $T\ge r+1$.  If $Y_1,\dots,Y_r$ are independent random variables
		such that $Y_i\ge0$ and $\E Y_i\le1$, then
		\[
		\Prob\!\left(\sum_{i=1}^rY_i\ge T\right)\le q_{r,T}.
		\]
		Equality holds if and only if, independently for every $i$,
		\[
		\Prob(Y_i=T)=\frac1T,\qquad \Prob(Y_i=0)=1-\frac1T.
		\]
	\end{theorem}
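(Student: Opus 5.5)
We reduce to the mean-one two-point setting and then apply \cref{thm:chain} with \cref{lem:simplex-rigidity}. The case $r=1$ is Markov's inequality, so assume $r\ge2$. First we normalize means. If $\E Y_i<1$, we may raise $Y_i$ on part of the space without decreasing the event $\{\sum Y_i\ge T\}$. For instance, we replace $Y_i$ by a mixture that moves some mass upward to reach mean exactly $1$; equivalently, we compare with $Y_i+c\1_{B}$ for suitable independent randomization. This only enlarges the tail. For equality we must check that the extremal case forces $\E Y_i=1$ already. This holds because a strictly smaller mean leaves slack, so the raised variable has strictly larger tail unless its law is already the extremizer. We will handle this by running the equality analysis on the modified variables and noting that the extremal law has mean exactly $1$. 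So assume $\E Y_i=1$ for all $i$.

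Next we use the mixture decomposition in \cref{thm:chain}. Each mean-one law is a mixture of $\delta_1$ and two-point laws on $\{\ell,h\}$. By independence, $\Prob(\sum Y_i\ge T)$ is an average over choices of component laws, one for each coordinate. It therefore suffices to bound the tail when each $Y_i$ is either $\equiv1$ or a nondegenerate two-point law. Suppose $m$ coordinates equal $1$. Then the event becomes $\sum_{i\in I}Y_i\ge T-m$ over $|I|=r-m$ variables, with $T-m\ge (r-m)+1$. By induction on $r$ this has probability at most $q_{r-m,T-m}$, and $q_{r-m,T-m}<q_{r,T}$ strictly when $m\ge1$. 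The strict inequality follows because $(1-\frac1{T-m})^{r-m}>(1-\frac1T)^r$; we check this by showing that each step $(r,T)\mapsto(r-1,T-1)$ increases $(1-1/T)^r$, an elementary calculus check. Hence equality forces all components to be nondegenerate two-point laws.

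For the two-point case we set $H(A)=\1\{\sum_i y(A)_i\ge T\}$, which is increasing. \cref{thm:chain} gives a chain $\mathcal C$ with $\Prob(\sum Y_i\ge T)=\E_\pi H\le\E_{\nu_{\mathcal C}}H$. Along the chain, $H(A_j)=0$ for $j<j^*$ and $H(A_j)=1$ for $j\ge j^*$. By telescoping, $\E_{\nu_{\mathcal C}}H=K_r(y(A_{j^*}))$ if $H(A_{j^*})=1$, since $\sum_{j\ge j^*}\nu(A_j)=K_r(y(A_{j^*}))$. Now $\sum_i y(A_{j^*})_i\ge T$, so \cref{lem:simplex-rigidity} yields the bound $q_{r,T}$. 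If $H\equiv0$ on the chain the bound is $0$.

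The main obstacle is equality. Equality forces equality in \cref{lem:simplex-rigidity} at $y(A_{j^*})$. That requires one coordinate $h_j=T$ with all other entries $0$, so $\ell_i=0$ for $i\ne j$ and $A_{j^*}=\{j\}$. The remaining issue is to transfer this information from the chain back to every coordinate and every mixture component; for this we would exploit the symmetry of the chain construction. The mixture components must all be extremal, so it suffices to show that a single two-point product law with $\E_\pi H=q_{r,T}$ has every $Y_i\in\{0,T\}$. Our first attempt uses a direct comparison. Fix $Y_{-i}$ and view the tail as a function of the law of $Y_i$. Conditioning on $Y_{-i}$ converts this into a one-variable Markov-type bound, and equality in Markov forces $Y_i\in\{0,T'\}$ at each relevant threshold. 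Combined with the fact that $\ell_i=0$ for all but the distinguished index, and that the distinguished index can be chosen arbitrarily by reordering (apply the argument with $H$ restricted so that a different coordinate is forced), this yields $\ell_i=0$ and $h_i=T$ for all $i$. The converse, that these laws attain $q_{r,T}$, is immediate: the sum is at least $T$ iff some $Y_i=T$.
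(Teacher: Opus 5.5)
Your treatment of the inequality follows the paper. You use Markov for $r=1$, the mixture decomposition of \cref{thm:chain}, removal of deterministic coordinates by induction together with the strict monotonicity of $m\mapsto(1-1/(m+c))^m$, and the chain/telescoping bound closed off by \cref{lem:simplex-rigidity}. The genuine gap is the equality case for nondegenerate two-point systems, which is exactly the step you call the main obstacle. \cref{lem:simplex-rigidity} gives only $A_{j^*}=\{j\}$, $h_j=T$ and $\ell_i=0$ for $i\ne j$. You still need $\ell_j=0$ and $h_i=T$ for every $i\ne j$, and none of your proposed mechanisms delivers this.

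\begin{itemize}
\item There is no symmetry to exploit, since the coordinates carry different laws on $\{\ell_i,h_i\}$.
\item You cannot make a different coordinate distinguished by ``reordering''. The chain is produced by \cref{thm:chain} for the given payoff; you do not choose it. Replacing $H$ by a restricted payoff changes $\E_\pi H$, so the equality hypothesis no longer applies.
\item Conditioning on $Y_{-i}$ and applying Markov to $Y_i$ does not propagate equality. Global equality would force equality in every conditional Markov step only if the averaged bound $\E\min\{1,1/(T-S_{-i})\}$ (read as $1$ when $S_{-i}\ge T$) were itself at most $q_{r,T}$. This fails once $S_{-i}$ has mass just below $T$. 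For example, take $r=3$, $T=4$, $Y_1\in\{0,4\}$ and $Y_2\in\{0,3.9\}$, both with mean one. The averaged Markov bound for $Y_3$ is then about $0.5817>37/64=q_{3,4}$, even though this configuration is consistent with everything the simplex lemma tells you. Even where Markov is tight, tightness at assorted thresholds $T-S_{-i}$ would not force $h_i=T$.
\end{itemize}

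The missing idea is to condition on the distinguished coordinate and use the full induction hypothesis in dimension $r-1$, including its equality statement. Set $\ell=\ell_j$, $u=T-\ell$ and $R=\sum_{i\ne j}Y_i$. Since $u>T-1\ge r$, the $(r-1)$-variable bound applies at threshold $u$, giving
\[
\Prob\Bigl(\sum_iY_i\ge T\Bigr)=\frac{1-\ell}{u}+\frac{T-1}{u}\,\Prob(R\ge u)\le 1-(T-1)\frac{(u-1)^{r-1}}{u^r}.
\]
The function $g(u)=(T-1)(u-1)^{r-1}/u^r$ is strictly decreasing for $u>r$, so the right side is strictly below $q_{r,T}$ unless $\ell=0$. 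Once $\ell_j=0$, conditioning on $Y_j=0$ forces $\Prob(R\ge T)=1-(1-1/T)^{r-1}$. The inductive equality case at threshold $T\ge(r-1)+1$ then yields the $\{0,T\}$ law for every $i\ne j$.

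A smaller point concerns your mean normalization. Raising variables with $\E Y_i<1$ at a fixed threshold $T$ is fine for the inequality. For equality, however, you must still show that the raising strictly increases the tail. It does: consider the event that the raised $Y_i$ equals $T$, the original $Y_i$ is below $T$, and all other raised variables vanish. This event has positive probability, and on it the original sum is below $T$. The paper avoids this by the shift $\widehat Y_i=Y_i+1-\E Y_i$, $\widehat T=T+r-\sum_i\E Y_i$. That keeps the event unchanged and gives $q_{r,\widehat T}<q_{r,T}$ directly whenever some mean is below one.
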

	
	\begin{proof}
		For $r=1$, Markov's inequality gives
		\[
		\Prob(Y_1\ge T)\le\frac{\E Y_1}{T}\le\frac1T=q_{1,T}.
		\]
		Equality in both steps forces $\E Y_1=1$, $Y_1=T$ on
		$\{Y_1\ge T\}$, and $Y_1=0$ otherwise.  This gives the asserted law for
		$r=1$.  We henceforth assume $r\ge2$ and proceed by induction on $r$.
		
		We first prove the theorem under the additional assumption that every variable
		has mean one.  Suppose initially that all $Y_i$ have nondegenerate two-point laws
		$\{\ell_i,h_i\}$.  Apply \cref{thm:chain} to the increasing payoff
		\[
		H(A)=\1\!\left\{\sum_i y(A)_i\ge T\right\}.
		\]
		Along the resulting chain, let $A_t$ be the first state with $H(A_t)=1$.  If no
		such state exists, the tail probability is zero.
		Otherwise, the states for which $H=1$ form a terminal segment of the chain, so the
		definition of the chain measure gives
		\[
		\Prob\!\left(\sum_iY_i\ge T\right)
		=\E_\pi H
		\le\E_{\nu_{\mathcal C}}H
		=K_r(y(A_t))
		\le q_{r,T},
		\]
		where the penultimate equality is a telescoping sum and the final inequality is
		\cref{lem:simplex-rigidity}.  This proves the required inequality for
		nondegenerate mean-one two-point systems.  If equality holds in the target bound,
		every displayed inequality is an equality.  After applying a permutation of the
		coordinate set $[r]$, \cref{lem:simplex-rigidity} allows us to assume
		\[
		A_t=\{1\},\qquad h_1=T,\qquad \ell_i=0\quad(2\le i\le r).
		\]
		Indeed, the vector $y(A_t)$ has exactly one positive coordinate, whereas each
		high coordinate is strictly larger than one; hence $A_t$ consists of that unique
		coordinate.
		
		We claim that also $\ell_1=0$.  Put $\ell=\ell_1$ and
		$u=T-\ell$.  Since $\E Y_1=1$,
		\[
		\Prob(Y_1=T)=\frac{1-\ell}{T-\ell}.
		\]
		Let $R=Y_2+\cdots+Y_r$.  Conditioning on $Y_1$ gives
		\[
		\Prob\!\left(\sum_iY_i\ge T\right)
		=\frac{1-\ell}{T-\ell}
		+\frac{T-1}{T-\ell}\Prob(R\ge T-\ell).
		\]
		Since $T-\ell>r$, the induction hypothesis in dimension $r-1$, applied at
		threshold $T-\ell$, yields
		\begin{align}
			\Prob\!\left(\sum_iY_i\ge T\right)
			&\le 1-\frac{T-1}{T-\ell}
			\left(1-\frac1{T-\ell}\right)^{r-1} \notag\\
			&=1-(T-1)\frac{(u-1)^{r-1}}{u^r}.\label{eq:ell-bound}
		\end{align}
		For $g(u)=(T-1)(u-1)^{r-1}/u^r$,
		\[
		\frac{g'(u)}{g(u)}=\frac{r-u}{u(u-1)}<0
		\qquad(T-1<u\le T),
		\]
		because $T\ge r+1$.  Thus the right side of \eqref{eq:ell-bound} is strictly
		smaller than $q_{r,T}$ when $\ell>0$.  Consequently $\ell_1=0$.
		
		Now condition on $Y_1=0$.  Since $Y_1$ equals $T$ with probability $1/T$, equality
		in the $r$-variable bound forces
		\[
		\Prob\!\left(\sum_{i=2}^rY_i\ge T\right)
		=1-\left(1-\frac1T\right)^{r-1}.
		\]
		The induction hypothesis shows that every remaining $Y_i$ has the asserted
		$\{0,T\}$ law.  This characterizes equality for nondegenerate two-point systems.
		
		We next allow deterministic mean-one coordinates.  If there are $d\ge1$ such
		coordinates and $m=r-d$ nondegenerate coordinates, then
		$m=0$ gives tail probability zero.  For $m\ge1$, apply the induction hypothesis to
		the $m$ nondegenerate coordinates at threshold $T-d=m+c$, where $c=T-r\ge1$.
		Their complementary probability is at least
		\[
		a_m(c):=\left(1-\frac1{m+c}\right)^m.
		\]
		The function $m\mapsto a_m(c)$ is strictly decreasing.  Indeed, for real $m>0$
		and $z=1/(m+c)$,
		\[
		\frac{\mathrm d}{\mathrm dm}\log a_m(c)
		=\log(1-z)+\frac{m}{(m+c-1)(m+c)}
		\le\log(1-z)+z<0.
		\]
		Since $T=r+c$, we have $a_r(c)=(1-1/T)^r$.  Thus
		$a_m(c)>a_r(c)$ when $m<r$, and the tail probability is strictly less than
		$q_{r,T}$.  This both proves the inequality and excludes deterministic
		coordinates in the case of equality.
		
		Now consider arbitrary mean-one laws on $[0,\infty)$.  By the mixture assertion in
		\cref{thm:chain}, each marginal can be generated by first choosing, independently
		across coordinates, either $\delta_1$ or a mean-one two-point component, and then
		sampling from that component.  Conditional on the latent components, the
		preceding argument bounds the tail probability by $q_{r,T}$.  If the unconditional
		probability equals $q_{r,T}$, then equality must hold for almost every latent
		tuple.  The preceding equality characterization, including the exclusion of deterministic
		coordinates, forces every chosen component in almost every tuple to be the unique
		mean-one law on $\{0,T\}$.  By Fubini's theorem, every marginal mixing measure is
		therefore concentrated on that single
		component.  Hence each original marginal is the $\{0,T\}$ law.  This
		proves the theorem when all means are one.
		
		Finally, put $m_i=\E Y_i$ and $\sigma=\sum_i m_i$ for general $m_i\le1$, and set
		\[
		\widehat Y_i:=Y_i+1-m_i,
		\qquad \widehat T:=T+r-\sigma.
		\]
		The variables $\widehat Y_i$ are independent, nonnegative, and have mean one,
		and
		\[
		\sum_iY_i\ge T
		\quad\Longleftrightarrow\quad
		\sum_i\widehat Y_i\ge\widehat T.
		\]
		Since $\widehat T\ge T\ge r+1$, the mean-one case gives
		\[
		\Prob\!\left(\sum_iY_i\ge T\right)
		\le q_{r,\widehat T}\le q_{r,T}.
		\]
		If $\sigma<r$, then $\widehat T>T$ and the last inequality is strict.  Hence
		equality forces $m_i=1$ for every $i$, and the mean-one equality characterization completes
		the proof.
	\end{proof}
	
	Scaling gives the form used for hypergraphs.
	
	\begin{corollary}\label{cor:threshold-rigidity}
		Let $0<x\le1/(k+1)$, and let $Z_1,\dots,Z_k$ be independent, identically
		distributed random variables in $[0,1]$ with $\E Z_i\le x$.  Then
		\[
		\Prob(Z_1+\cdots+Z_k\ge1)\le q_k(x)=1-(1-x)^k.
		\]
		Equality holds if and only if their common law is
		\[
		\mu_x:=(1-x)\delta_0+x\delta_1.
		\]
	\end{corollary}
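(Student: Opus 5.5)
The plan is to deduce the corollary from \cref{thm:tail-rigidity} by rescaling, taking $r=k$, $Y_i=Z_i/x$ and $T=1/x$.

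\emph{Checking the hypotheses.} Since $Z_i\in[0,1]$, each $Y_i\ge0$. The $Y_i$ are independent and $\E Y_i=\E Z_i/x\le1$. Moreover $T=1/x\ge k+1=r+1$ because $x\le1/(k+1)$. So \cref{thm:tail-rigidity} applies.

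\emph{The inequality.} The events $\{Z_1+\cdots+Z_k\ge1\}$ and $\{\sum_iY_i\ge T\}$ coincide. Hence
\[
\Prob(Z_1+\cdots+Z_k\ge1)\le q_{k,1/x}=1-(1-x)^k=q_k(x).
\]

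\emph{The equality case.} Suppose equality holds. \Cref{thm:tail-rigidity} then forces each $Y_i$ to satisfy $\Prob(Y_i=T)=1/T$ and $\Prob(Y_i=0)=1-1/T$. Translating back, $\Prob(Z_i=1)=x$ and $\Prob(Z_i=0)=1-x$, so the common law is $\mu_x$.

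For the converse, suppose each $Z_i\sim\mu_x$ independently. Then $Z_1+\cdots+Z_k\ge1$ exactly when some $Z_i=1$. This event has probability $1-(1-x)^k$, and $\E Z_i=x$, so $\mu_x$ attains equality. The identically distributed hypothesis is not needed for the inequality; it only makes the equality statement about a single common law.

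There is no real obstacle here. The only points to check are that the range $x\le1/(k+1)$ translates exactly into $T\ge r+1$, and that the constraint $Z_i\le1$ is never used beyond nonnegativity. The equality law is automatically supported on $\{0,1\}\subseteq[0,1]$, so it is consistent with that constraint.
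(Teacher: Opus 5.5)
Your proposal is correct and matches the paper's proof, which is exactly the rescaling $Y_i=Z_i/x$, $T=1/x$ (with $r=k$) fed into \cref{thm:tail-rigidity}. You also fill in the details the paper leaves implicit: that $x\le1/(k+1)$ becomes $T\ge r+1$, that $q_{k,1/x}=q_k(x)$, and that the $\{0,T\}$ equality law translates back to $\mu_x$.
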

	
	\begin{proof}
		Apply \cref{thm:tail-rigidity} to $Y_i=Z_i/x$ and $T=1/x$.
	\end{proof}

	Thus \cref{cor:threshold-rigidity} establishes exactly the tail bound and
	Bernoulli equality characterization required in
	\cref{thm:parameterized-stability} when
	$f_k=k+1$.
	
	We record the compactness consequence that turns convergence of the tail
	probabilities to the upper bound into convergence of the underlying laws.  For a
	probability measure $\mu$ on
	$[0,1]$, write $\mu^{\otimes k}$ for its $k$-fold product measure.
	
	\begin{lemma}\label{lem:compactness-rigidity}
		Let $f_k>k$.  Suppose that, for every $0<y\le1/f_k$, independent,
		identically distributed random variables $Y_1,\ldots,Y_k\in[0,1]$ with
		$\E Y_i\le y$ satisfy
		\[
		\Prob(Y_1+\cdots+Y_k\ge1)\le q_k(y),
		\]
		with equality if and only if their common law is
		$\mu_y=(1-y)\delta_0+y\delta_1$.  Suppose
		$x_j\to x\in(0,1/f_k]$ and $\mu_j$ are probability measures on $[0,1]$
		such that
		\[
		\int z\,\dd\mu_j(z)\le x_j+o(1)
		\]
		and
		\[
		\Prob_{\mu_j^{\otimes k}}(Z_1+\cdots+Z_k\ge1)
		\longrightarrow q_k(x).
		\]
		Then $\mu_j$ converges weakly to $\mu_x$.
	\end{lemma}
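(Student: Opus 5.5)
The argument is a subsequence-and-uniqueness one. Probability measures on the compact interval $[0,1]$ form a sequentially compact space under weak convergence (Prokhorov, or simply Helly's selection theorem). It therefore suffices to show that every subsequence of $(\mu_j)$ has a further subsequence converging weakly to $\mu_x$. Fix a subsequence and pass to a further one with $\mu_j\to\mu$ weakly for some probability measure $\mu$ on $[0,1]$. The goal is to show that $\mu$ is an equality case of the hypothesised tail inequality at level $x$, and then to invoke the uniqueness clause to conclude $\mu=\mu_x$.

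The first step is the mean. The map $z\mapsto z$ is bounded and continuous on $[0,1]$, so
\[
\int z\,\dd\mu=\lim_j\int z\,\dd\mu_j\le\lim_j\bigl(x_j+o(1)\bigr)=x.
\]
Since $0<x\le1/f_k$, the hypothesis applies with $y=x$ and gives $\Prob_{\mu^{\otimes k}}(Z_1+\cdots+Z_k\ge1)\le q_k(x)$.

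The second step, which is the only delicate point, is to pass the tail probability to the limit. Weak convergence $\mu_j\to\mu$ implies $\mu_j^{\otimes k}\to\mu^{\otimes k}$ weakly on $[0,1]^k$: products of bounded continuous functions converge, and these are dense by Stone--Weierstrass. The set $F=\{z\in[0,1]^k:z_1+\cdots+z_k\ge1\}$ is closed, so the portmanteau theorem gives
\[
\Prob_{\mu^{\otimes k}}(F)\ge\limsup_j\Prob_{\mu_j^{\otimes k}}(F)=q_k(x).
\]
Only upper semicontinuity on a closed set is needed, and it points in exactly the useful direction, so no continuity of the boundary mass is required. Combining the two steps gives equality $\Prob_{\mu^{\otimes k}}(F)=q_k(x)$ with $\int z\,\dd\mu\le x$.

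The uniqueness part of the hypothesis now forces $\mu=\mu_x$. Every subsequence therefore has a further subsequence converging to the same limit $\mu_x$, and so the whole sequence converges weakly to $\mu_x$. The one thing to state carefully is the portmanteau direction: the tail event is closed, which makes the limsup inequality available.
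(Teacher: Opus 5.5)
Your proof is correct and matches the paper's argument step for step. Both use weak compactness with the subsequence principle, pass the mean bound to the limit, use $\mu_j^{\otimes k}\Rightarrow\mu^{\otimes k}$ and the closed-set portmanteau inequality to get $q_k(x)\le\mu^{\otimes k}(F)\le q_k(x)$, and then apply the equality clause to conclude $\mu=\mu_x$. Your Stone--Weierstrass justification of the product convergence just fills in a step the paper asserts without proof.
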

	
	\begin{proof}
		The space of probability measures on the compact interval $[0,1]$ is weakly
		compact.  Start with an arbitrary subsequence of $(\mu_j)$ and pass to a further
		subsequence, still indexed by $j$, such that $\mu_j\Rightarrow\mu$ for some
		probability measure $\mu$ on $[0,1]$.  Product measures then satisfy
		$\mu_j^{\otimes k}\Rightarrow\mu^{\otimes k}$.  Moreover, since
		$z\mapsto z$ is continuous on $[0,1]$,
		\[
		\int z\,\dd\mu(z)=\lim_j\int z\,\dd\mu_j(z)\le x.
		\]
		The set
		\[
		\mathcal E:=\{(z_1,\ldots,z_k)\in[0,1]^k:z_1+\cdots+z_k\ge1\}
		\]
		is closed.  The closed-set part of the Portmanteau theorem
		\cite[Theorem~2.1(iii)]{Billingsley1999} states that if
		$\nu_j\Rightarrow\nu$, then
		$\limsup_j\nu_j(F)\le\nu(F)$ for every closed set $F$.  Applying this with
		$\nu_j=\mu_j^{\otimes k}$, $\nu=\mu^{\otimes k}$ and $F=\mathcal E$, and then
		using the assumed convergence of the tail probabilities and the assumed
		tail bound gives
		\[
		q_k(x)\le\mu^{\otimes k}(\mathcal E)\le q_k(x).
		\]
		Thus equality holds in the assumed tail bound, and its equality
		characterization gives
		$\mu=\mu_x$.  We have shown that every subsequence of $(\mu_j)$ has a
		further subsequence converging weakly to $\mu_x$.  Therefore the full sequence
		converges weakly to $\mu_x$.
	\end{proof}
	
	\section{Approximate fractional covers}\label{sec:regularity}
	
	We next convert an ordinary matching constraint into a weight function that covers
	all but at most $\rho n^k$ edges.  More precisely, for fixed
	$\xi<1/k$ and $\rho>0$, the statement is uniform for
	$\nu(\cH)/n\le\xi$.
	
	We first state the precise regularity lemma.  If $V_1,\ldots,V_k$ are disjoint
	sets of the same size, define the corresponding crossing cell by
	\[
	\mathcal K(V_1,\ldots,V_k)
	:=\{E\in\tbinom{V_1\cup\cdots\cup V_k}{k}:|E\cap V_i|=1
	\text{ for every }i\}.
	\]
	For a $k$-graph $\cH$, the density of this cell is
	\[
	d_{\cH}(V_1,\ldots,V_k)
	:=\frac{|\cH\cap\mathcal K(V_1,\ldots,V_k)|}
	{|V_1|\cdots|V_k|}.
	\]
	The tuple $(V_1,\ldots,V_k)$ is \emph{$\eps$-regular} if, for every
	$U_i\subseteq V_i$ with $|U_i|\ge\eps|V_i|$,
	\[
	\bigl|d_{\cH}(U_1,\ldots,U_k)-d_{\cH}(V_1,\ldots,V_k)\bigr|
	\le\eps.
	\]
	The following vertex-partition form of weak hypergraph regularity is the only
	regularity result used in the paper; see \cite{Chung1991,FranklRodl1992}.  We state
	the complete form needed for the reduced-hypergraph construction.
	
	\begin{lemma}\label{lem:weak-regularity}
		For every $k\ge2$, $t_0\ge k$ and $\eps>0$, there are integers
		$T_0=T_0(k,t_0,\eps)$ and $n_0=n_0(k,t_0,\eps)$
		such that every $k$-graph on $n\ge n_0$ vertices admits a partition
		\[
		V=V_0\cup V_1\cup\cdots\cup V_t
		\]
		for which $t_0\le t\le T_0$, $|V_0|\le\eps n$, the sets
		$V_1,\ldots,V_t$ have the same size, and all but at most
		$\eps\binom tk$ choices $1\le i_1<\cdots<i_k\le t$ give an
		$\eps$-regular tuple $(V_{i_1},\ldots,V_{i_k})$.
	\end{lemma}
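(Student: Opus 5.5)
This is the weak hypergraph regularity lemma, and I would prove it by the standard energy-increment argument, adapted to crossing cells of $k$ parts as in Chung and Frankl--R\"odl. For a partition $\mathcal P=\{V_0,V_1,\ldots,V_t\}$ with equal-size nonexceptional parts, define the index
\[
q(\mathcal P):=\frac{1}{t^k}\sum_{1\le i_1<\cdots<i_k\le t} d_{\cH}(V_{i_1},\ldots,V_{i_k})^2 .
\]
By construction $0\le q(\mathcal P)\le 1/k!$. The plan rests on three properties of $q$. It does not decrease under refinement, by Cauchy--Schwarz, or equivalently convexity of $u\mapsto u^2$ applied to the weighted average of sub-cell densities. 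It is stable under small modifications of the exceptional set. And it gains a fixed amount whenever many tuples are irregular.

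Start from an arbitrary equipartition into $t_0$ parts, with at most $t_0$ leftover vertices placed in $V_0$. This uses $n\ge n_0$ so that the leftover is at most $\eps n/2$.

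The iteration step is the core. Suppose more than $\eps\binom tk$ tuples are not $\eps$-regular. For each irregular tuple $(V_{i_1},\ldots,V_{i_k})$, fix witnesses $U_{i_j}\subseteq V_{i_j}$ with $|U_{i_j}|\ge\eps|V_{i_j}|$ whose density deviates by more than $\eps$. Partitioning each $V_{i_j}$ into $U_{i_j}$ and its complement and applying the defect form of Cauchy--Schwarz increases that tuple's contribution to the mean square by at least $\eps^{2}\prod_j(|U_{i_j}|/|V_{i_j}|)\cdot\eps^2\ge\eps^{k+2}$. This contribution is measured relative to the cell volume.

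Next, take the common refinement of each $V_i$ by all witness sets assigned to it. Part $V_i$ lies in at most $\binom{t-1}{k-1}$ tuples, so it is split into at most $2^{\binom{t-1}{k-1}}$ atoms. The index of this refinement is at least $q(\mathcal P)+c\,\eps^{k+3}$ for a constant $c=c(k)>0$: at least an $\eps$-fraction of the tuples each gain $\eps^{k+2}$, and monotonicity prevents losses elsewhere.

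Finally, re-equalize part sizes. Chop every atom into pieces of a common size $m=\lfloor |V_i|/(4^{\binom{t-1}{k-1}}\cdot \text{large})\rfloor$ and move the remainders into $V_0$. This adds at most a $\eps/2^{j+2}$ fraction of $n$ at iteration $j$, if the piece count is chosen large enough. I expect this step to be the main obstacle. Discarding vertices into $V_0$ and rescaling the normalizer $t^k$ must not destroy the index gain. I would handle it by comparing against the coarsened partition on the original vertex set: the discarded vertices touch at most an $O(\eps^{k+4})$ fraction of the $k$-tuples, so the index drops by at most half the gain. I would also define $q$ with normalization by $n^k$ weights rather than $t^k$, so that refinement monotonicity is exact.

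Since $q\le 1/k!$ and each round gains at least $c\eps^{k+3}/2$, there are at most $2/(c\,k!\,\eps^{k+3})$ rounds. The number of parts after each round is bounded by a tower-type function of the previous count, which gives $T_0(k,t_0,\eps)$. The total exceptional set is at most $\eps n\sum_j 2^{-j-1}\le\eps n$. The value $n_0$ is chosen so that all parts at the final stage still have at least $1/\eps$ vertices, which ensures the floors cost nothing. When the process stops, at most $\eps\binom tk$ tuples are irregular and $t_0\le t\le T_0$, as required.
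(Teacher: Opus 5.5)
Your proposal is correct, and it is the Szemer\'edi-type index-increment argument behind the results of Chung and Frankl--R\"odl; the paper gives no proof of its own and simply cites those works for this lemma. There are two small points. First, the defect bound should read $\prod_j(|U_{i_j}|/|V_{i_j}|)\cdot\eps^2\ge\eps^{k+2}$, since your display carries a spurious extra factor $\eps^2$. Second, the re-equalization step is cleanest if you treat the remainders as additional parts of a genuine refinement of the atom partition, so monotonicity applies, and then observe that moving them into $V_0$ lowers the $n^k$-weighted index by at most their total vertex fraction.
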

	
	The feature used below is that if an $\eps$-regular cell has density at
	least $d>\eps$, then every choice of subsets occupying at least an $\eps$-fraction
	of each participating cluster still contains an edge.  This permits a fractional
	matching of the reduced $k$-graph to be rounded greedily inside the original
	$k$-graph.
	
	For a finite $k$-graph $R$, a \emph{fractional matching} is a function
	$\varphi:E(R)\to[0,\infty)$ such that
	\[
	\sum_{e\ni v}\varphi(e)\le1
	\quad\text{for every }v\in V(R).
	\]
	Its size is $\sum_{e\in E(R)}\varphi(e)$, and $\nu^*(R)$ denotes the maximum
	size.  A \emph{fractional vertex cover} is a function
	$z:V(R)\to[0,1]$ such that $\sum_{v\in e}z(v)\ge1$ for every $e\in E(R)$.
	Finite-dimensional linear-programming duality says that the minimum weight
	$\sum_vz(v)$ of such a cover equals $\nu^*(R)$.  The usual dual only requires
	$z(v)\ge0$; truncating every coordinate at $1$ preserves all cover constraints,
	so the displayed range $[0,1]$ entails no loss.
	
	Observe that \cref{cor:threshold-rigidity} already gives the required
	asymptotic cover bound under a fractional matching constraint.  Indeed, if
	$\cH\subseteq\binom{[n]}k$, $0<s\le n/(k+1)$, and $\nu^*(\cH)\le s$, choose a
	fractional vertex cover $w:[n]\to[0,1]$ of total weight at most $s$.  For
	independent uniform vertices $V_1,\ldots,V_k\in[n]$, set $Z_i=w(V_i)$.  Then
	$\E Z_i\le s/n$, and every edge of $\cH$ contributes its $k!$ orderings to the
	event $Z_1+\cdots+Z_k\ge1$.  Consequently,
	\[
	k!|\cH|
	\le n^k\Prob(Z_1+\cdots+Z_k\ge1)
	\le n^k-(n-s)^k.
	\]
	For integral $s$, the last quantity divided by $k!$ equals
	$M_k(n,s)+O_k(n^{k-1})$.  Thus the remaining issue is precisely that
	$\nu(\cH)\le s$ need not imply $\nu^*(\cH)\le s$; the next lemma supplies the
	quantified substitute required for the ordinary matching problem.
	
	We now prove the weight-function statement used in the stability argument.
	
	\begin{lemma}\label{lem:approx-cover}
		Fix $k\ge2$, $0\le\xi<1/k$, and $\rho>0$.  There exists $n_0$ such that, for every
		$n\ge n_0$, the following holds.  If $0\le x\le\xi$ and
		$\cH\subseteq\binom{[n]}k$ satisfies $\nu(\cH)\le xn$, then there is a function
		$w:[n]\to[0,1]$ such that
		\[
		\sum_{v=1}^n w(v)\le(x+\rho)n
		\]
		and all but at most $\rho n^k$ edges $E\in\cH$ satisfy
		\[
		\sum_{v\in E}w(v)\ge1.
		\]
	\end{lemma}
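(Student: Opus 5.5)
We apply \cref{lem:weak-regularity} to $\cH$ and pass to a reduced $k$-graph. We then show its fractional matching number is at most about $x t$ by greedy rounding. Finally, LP duality on the reduced graph gives a cover, which we pull back to $[n]$.

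Choose constants $0<\eps\ll d\ll\rho$ (also small relative to $1/k-\xi$) and $t_0$ large. Apply \cref{lem:weak-regularity} with $\eps,t_0$ to obtain $V_0,V_1,\dots,V_t$ with $|V_i|=m$, so that $(1-\eps)n/t\le m\le n/t$. Let $R$ be the $k$-graph on $[t]$ whose edges are the $k$-sets $\{i_1,\dots,i_k\}$ such that $(V_{i_1},\dots,V_{i_k})$ is $\eps$-regular with density greater than $d$.

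\textbf{Discarded edges.} The edges of $\cH$ not lying in some cell of $R$ fall into four types:
\begin{itemize}
\item edges meeting $V_0$, at most $\eps n^k$;
\item edges with two vertices in one cluster, at most $t\binom m2 n^{k-2}\le n^k/t_0$;
\item edges in irregular cells, at most $\eps\binom tk m^k\le\eps n^k$;
\item edges in cells of density at most $d$, at most $d n^k$.
\end{itemize}
The total is at most $\rho n^k$ once the constants are small.

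\textbf{Key claim: $\nu^*(R)\le (x+\rho/2)t$.} Suppose instead that $\varphi$ is a fractional matching of $R$ of size larger than $(x+\rho/2)t$. First we make $\varphi$ rational with a common denominator. Then we realise it inside $\cH$ as follows. For each cluster $V_i$, split it into disjoint parts $V_i^e$, one for each edge $e\ni i$ of $R$, with $|V_i^e|\approx\varphi(e)m$. This is possible because $\sum_{e\ni i}\varphi(e)\le1$. Edges with $\varphi(e)$ below a tiny threshold $\beta$ are dropped; since $R$ has at most $t^k$ edges, this costs only $\beta t^k$ in size, so we need $\eps\ll\beta\ll\rho/t^k$ uniformly. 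The dependence of $\eps$ on $T_0$ is circular, so we handle it differently: we round $\varphi$ to multiples of $\rho/(4k)$ per vertex instead, or simply drop edges with weight below $\rho/(4t^{k-1})$. The latter loses at most $\rho t/4$ in total, and the remaining weights are bounded below by a quantity depending only on $\rho$ and $T_0$.

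Within each cell $(V_{i_1}^e,\dots,V_{i_k}^e)$, we greedily pick disjoint edges of $\cH$. By regularity and density $>d>\eps$, an edge exists as long as each leftover part has size at least $\eps m$. Call this fraction $\eps'$; the regularity applies to subsets of size at least $\eps m$, and we need $\eps m\le\eps'\varphi(e)m$. This yields about $(\varphi(e)-\eps/\varphi(e))m$ disjoint edges per cell. Summing over cells gives a matching of size at least
\[
(x+\rho/4)tm\ge(x+\rho/4)(1-\eps)n>xn,
\]
which contradicts $\nu(\cH)\le xn$.

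\textbf{Main obstacle.} The delicate step is the order of quantifiers: $\eps$ must be small compared with the minimal retained weight, which depends on $T_0(\eps)$. We resolve it by discarding all edges of weight below $\rho/(8t^{k-1})$ and requiring only that each retained part has size at least $\rho m/(8t^{k-1})$. The greedy embedding then needs $\eps\le \rho/(8T^{k-1})$, where $T$ is the actual number of parts, and this need not hold.

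The alternative, which we would actually use, avoids fine splitting of clusters. First take a fractional matching on $R$, which by the LP can be chosen as a vertex of the polytope, so its weights are rationals with denominators bounded by a function of $k$ only. One may also use the standard fact that $\nu^*$ is attained by weights in $\{0,1/k!,\dots\}$-type bounded denominators; for $k$-graphs the denominators are bounded by a constant depending on $t$, which is again circular. Hence we instead absorb the issue by applying greedy selection with shrinking leftovers, using the known fact that an $\eps$-regular cell of density $d$ contains a matching covering all but $\eps$-fractions. Splitting each cluster randomly into pieces proportional to $\varphi$ preserves regularity up to $\eps/\min\varphi$, handled by choosing $\eps$ after fixing a bound $L=L(k,\rho)$ on the number of distinct weights. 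Such a bound follows by first rounding $\varphi$ down to multiples of $\rho/(4k)$ after merging, which loses at most $\rho t/4$.

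\textbf{Pull-back of the cover.} By LP duality, $R$ has a fractional cover $z:[t]\to[0,1]$ with $\sum z\le(x+\rho/2)t$. Define $w(v)=z(i)$ for $v\in V_i$ and $w(v)=0$ for $v\in V_0$. Then
\[
\sum_v w(v)\le(x+\rho/2)tm\le(x+\rho)n,
\]
and every edge lying in a cell of $R$ has weight sum at least $1$. This leaves at most $\rho n^k$ uncovered edges, as computed above.
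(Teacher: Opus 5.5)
Your architecture matches the paper's: weak regularity, a reduced $k$-graph $R$, an upper bound on $\nu^*(R)$ obtained by realising a large fractional matching as a matching of $\cH$, LP duality, and lifting the cover. Setting $w=0$ on $V_0$ and counting those edges as uncovered is a harmless variant of the paper's choice $w=1$ on $V_0$.

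The gap is the key claim $\nu^*(R)\le(x+\rho/2)t$, which the proposal never establishes. You correctly see that splitting each $V_i$ into parts of size about $\varphi(e)m$ conflicts with $t\le T_0(\eps)$. Regularity only applies to parts of size at least $\eps m$. The greedy loss is about $\eps m$ per cell (not $\eps m/\varphi(e)$), and it is summed over up to $\binom tk$ cells.

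The fix you say you would use does not work. Rounding $\varphi$ down to multiples of $\rho/(4k)$ loses up to $\rho/(4k)$ on every support edge, and the support may contain $\Theta(t^k)$ edges. For example, if $R$ is complete, the uniform fractional perfect matching has every weight equal to $1/\binom{t-1}{k-1}$, so it is rounded to $0$. Hence ``loses at most $\rho t/4$'' is false, and ``after merging'' is never explained. The bounded-denominator claim for vertices of the polytope is also false, as you partly acknowledge.

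The missing idea is that no splitting is needed. Choose $\gamma>\eps$ with $(1-\gamma)(1-\eps)(x+a)>x$ uniformly for $x\le\xi$. For each $e\in E(R)$, plan $m_e=\lfloor(1-\gamma)\varphi(e)m\rfloor$ disjoint edges in the cell of $e$. Every cluster then has total planned load at most $(1-\gamma)m$. So while you pick edges greedily, in any order and across all cells, each participating cluster keeps at least $\gamma m\ge\eps m$ unused vertices. Regularity of the original tuple, applied to these unused sets, gives density at least $d-\eps>0$ and hence a new edge. The only loss comes from the floors, at most $\binom tk\le\binom{T_0}k=O(1)=o(n)$, so the circularity disappears. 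This is exactly the paper's argument.

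Alternatively, you could rescue your splitting by taking $\varphi$ to be a basic optimal solution of the LP. Since there are only $t$ vertex constraints, it has at most $t$ nonzero entries. The per-cell losses then total at most $t(\eps m+1)\le\eps n+T_0$.
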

	
	\begin{proof}
		Put $C_k=1/(2(k-2)!)$.
		Choose $a$ with $0<a<\rho/4$.  Since $\xi+a>\xi$, choose $\gamma>0$ such that
		\[
		(1-\gamma)(\xi+a)>\xi.
		\]
		Next choose $\eps>0$ such that
		\[
		\eps<\min\{\gamma,\rho/8\}
		\quad\text{and}\quad
		(1-\gamma)(1-\eps)(\xi+a)>\xi,
		\]
		and then choose $d$ with $\eps<d<\rho/8$.  The function
		$(1-\gamma)(1-\eps)(x+a)-x$ is decreasing in $x$, so these choices ensure,
		uniformly for $0\le x\le\xi$, that
		\begin{equation}\label{eq:rounding-margin}
			(1-\gamma)(1-\eps)(x+a)>x.
		\end{equation}
		Finally choose $t_0\ge k$ sufficiently large that
		\[
		\frac{C_k}{t_0}+\eps+d<\rho,
		\]
		where $C_k$ is the constant in the repeated-cluster estimate below.
		
		Apply \cref{lem:weak-regularity} to $\cH$, and let
		$m=|V_1|=\cdots=|V_t|$.  Form a reduced $k$-graph $R$ on $[t]$: a set
		$\{i_1,\ldots,i_k\}\in\binom{[t]}k$ is an edge of $R$ if
		$(V_{i_1},\ldots,V_{i_k})$ is $\eps$-regular and has density at least $d$.
		
		We claim that the fractional matching number of $R$ is at most $(x+a)t$.  If not,
		let $(\varphi_e)_{e\in E(R)}$ be a fractional matching of larger total weight.
		For each $e\in E(R)$, plan to choose
		\[
		m_e=\lfloor(1-\gamma)\varphi_e m\rfloor
		\]
		pairwise disjoint edges from the corresponding crossing cell.  For every cluster
		$V_i$, its planned load is at most
		\[
		\sum_{e\ni i}m_e
		\le(1-\gamma)m\sum_{e\ni i}\varphi_e
		\le(1-\gamma)m.
		\]
		Greedily choose the edges in any order.  Whenever a cell is used, every
		participating cluster still has at least $\gamma m\ge\eps m$ unused vertices.
		By regularity, the density in the remaining subsets is at least
		$d-\eps>0$, so another edge is available.  Hence the plan can be completed and
		gives a
		matching in $\cH$ of size
		\begin{align*}
			\sum_{e\in E(R)}m_e
			\ge(1-\gamma)m\nu^*(R)-\binom tk
			>(1-\gamma)(1-\eps)(x+a)n-\binom{T_0}k>xn
		\end{align*}
		for sufficiently large $n$, by \eqref{eq:rounding-margin}.  This contradicts the
		hypothesis and proves the claim.
		
		By LP duality, $R$ has a fractional vertex cover $(z_i)_{i=1}^t$ with
		$z_i\in[0,1]$ and
		\[
		\sum_i z_i\le(x+a)t.
		\]
		Lift it by setting $w(v)=z_i$ for $v\in V_i$ and $w(v)=1$ for $v\in V_0$.
		Since $tm=n-|V_0|$ and $a+\eps<\rho$,
		\[
		\sum_vw(v)
		\le m(x+a)t+|V_0|
		\le(x+a+\eps)n
		<(x+\rho)n.
		\]
		
		Every edge lying in a cell indexed by an edge of $R$ is covered.  An uncovered edge
		must therefore use two vertices in one of $V_1,\ldots,V_t$, lie in an irregular crossing
		cell, or lie in a crossing cell of density below $d$.  The respective numbers are
		at most
		\[
		\frac{C_k}{t_0}n^k,\qquad \eps n^k,\qquad dn^k.
		\]
		For the first estimate, sum
		$\binom m2\binom{n}{k-2}$ over $V_1,\ldots,V_t$ and use
		$m\le n/t$.  For each of the other two estimates, sum over at most
		$\binom tk$ crossing cells, each containing at most $m^k$ possible edges.
		Edges meeting $V_0$ are covered because their weight sum is at least one.  The
		choice of $t_0$ makes the total smaller than $\rho n^k$.
	\end{proof}
	
	\section{Proof of stability}\label{sec:stability-proof}
	
	The stability argument admits the following parameterized form.

	\begin{theorem}\label{thm:parameterized-stability}
		Fix $k\ge2$ and $f_k>k$.  Suppose that, for every $0<x\le1/f_k$,
		independent, identically distributed random variables
		$Z_1,\ldots,Z_k\in[0,1]$ with $\E Z_i\le x$ satisfy
		\[
		\Prob(Z_1+\cdots+Z_k\ge1)\le q_k(x),
		\]
		with equality if and only if their common law is
		$\mu_x=(1-x)\delta_0+x\delta_1$.  Let $0<a\le1/f_k$ and $\eta>0$.
		There exist $\eps>0$ and $n_0$ such that the following holds whenever
		$n\ge n_0$ and
$		an\le s\le\frac{n}{f_k}.$
		If $\cF\subseteq\binom{[n]}k$, $\nu(\cF)\le s$, and
		\[
		|\cF|\ge M_k(n,s)-\eps n^k,
		\]
		then some $S\in\binom{[n]}s$ satisfies
		\[
		|\cF\mathbin\triangle\cA(S)|\le\eta n^k.
		\]
	\end{theorem}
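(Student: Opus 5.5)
We argue by contradiction and compactness.  Suppose the statement fails for some $a,\eta$.  Then there are sequences $n_j\to\infty$, $s_j$ with $an_j\le s_j\le n_j/f_k$, and families $\cF_j$ with $\nu(\cF_j)\le s_j$, $|\cF_j|\ge M_k(n_j,s_j)-\eps_j n_j^k$ where $\eps_j\to0$, such that $|\cF_j\mathbin\triangle\cA(S)|>\eta n_j^k$ for every $s_j$-set $S$.  Passing to a subsequence, $x_j:=s_j/n_j\to x\in[a,1/f_k]$.  Since $f_k>k$, we may apply \cref{lem:approx-cover} with $\xi=1/f_k<1/k$ and a parameter $\rho_j\to0$ chosen slowly enough that $n_j$ exceeds the corresponding $n_0(\rho_j)$.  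This gives weights $w_j:[n_j]\to[0,1]$ with average at most $x_j+\rho_j$ that cover all but $\rho_jn_j^k$ edges of $\cF_j$.

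Let $\mu_j$ be the empirical law of $w_j$.  Its mean is at most $x_j+o(1)$.  Counting ordered $k$-tuples, as in the remark preceding \cref{lem:approx-cover}, and discarding the $O(n^{k-1})$ tuples with repeated vertices, we get
\[
k!\,|\cF_j|\le n_j^k\,\Prob_{\mu_j^{\otimes k}}(Z_1+\cdots+Z_k\ge1)+k!\rho_jn_j^k+O(n_j^{k-1}).
\]
Since $k!M_k(n_j,s_j)/n_j^k\to q_k(x)$, the tail probability is at least $q_k(x)-o(1)$.  For the matching upper bound, the mean of $\mu_j$ is at most $x_j+o(1)$; if this exceeds $1/f_k$ slightly, continuity of $q_k$ and a tiny downward truncation handle it, and otherwise the hypothesis gives $\le q_k(x_j+o(1))=q_k(x)+o(1)$.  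Hence the tail probabilities converge to $q_k(x)$, and \cref{lem:compactness-rigidity} yields $\mu_j\Rightarrow\mu_x$.

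Next we convert weak convergence into structure.  Fix a small $\theta>0$.  Weak convergence to $(1-x)\delta_0+x\delta_1$ means that all but $o(n_j)$ vertices have $w_j(v)\in[0,\theta)\cup(1-\theta,1]$, and that the set $S'_j=\{v:w_j(v)>1-\theta\}$ has size $(x+o(1))n_j$.  Adjust $S'_j$ by $o(n_j)$ vertices to obtain an $s_j$-set $S_j$.

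We then bound the symmetric difference.  An edge of $\cF_j$ that is covered by $w_j$ and avoids $S_j$ has all vertices of weight below $\theta$, unless it uses one of the $o(n_j)$ intermediate or adjusted vertices.  If $k\theta<1$, such an edge cannot be covered.  Hence
\[
|\cF_j\setminus\cA(S_j)|\le \rho_jn_j^k+o(n_j^k)=o(n_j^k).
\]
Since $|\cF_j|\ge|\cA(S_j)|-o(n_j^k)$, we also get $|\cA(S_j)\setminus\cF_j|=o(n_j^k)$.  Thus $|\cF_j\mathbin\triangle\cA(S_j)|=o(n_j^k)<\eta n_j^k$ for large $j$, a contradiction.

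The main obstacle is the upper-bound bookkeeping in the second step.  There the mean slightly exceeds $x_j$, and near the endpoint $x=1/f_k$ the hypothesis is only available for $y\le1/f_k$.  I would handle this by upper semicontinuity, passing to a weak limit $\mu$ exactly as in \cref{lem:compactness-rigidity}: $\mu$ has mean at most $x\le1/f_k$, so the hypothesis applies to $\mu$ directly.  That lemma in fact only needs $\liminf$ of the tail probability to be at least $q_k(x)$, which the lower bound already supplies, so the argument avoids the issue altogether.
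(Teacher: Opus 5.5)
Your proposal is correct and follows essentially the paper's proof: contradiction along sequences, \cref{lem:approx-cover} with $\rho_j\to0$, the empirical weight law, \cref{lem:compactness-rigidity}, and thresholding the weights at $1-\theta$. The lemma's proof indeed only needs $\liminf\ge q_k(x)$ once the weak limit has mean at most $x\le1/f_k$; the paper instead proves full convergence by the same Portmanteau argument, so your ``downward truncation'' aside is unnecessary. Your endgame is a slight streamlining: comparing $\cF_j$ directly with $\cA(S_j)$, using $|\cA(S_j)|=M_k(n_j,s_j)$ and the size lower bound, lets you skip the paper's intermediate family $\cG_j$ and the estimate $|\cF_j\mathbin\triangle\cG_j|=o(n_j^k)$.
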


	\begin{proof}[Proof of \cref{thm:parameterized-stability}]
		Suppose the result is false.  Choosing the error in the size hypothesis to be
		$1/j$ and then taking $n_j\ge j$, we obtain sequences $n_j\to\infty$, integers
		$s_j$ with
		\[
		a\le x_j:=\frac{s_j}{n_j}\le\frac1{f_k},
		\]
		and families $\cF_j\subseteq\binom{[n_j]}k$ such that
		\begin{align}
			\nu(\cF_j)&\le s_j,\label{eq:stab-match}\\
			|\cF_j|&\ge M_k(n_j,s_j)-o(n_j^k),\label{eq:stab-size}
		\end{align}
		but $|\cF_j\mathbin\triangle\cA(S)|>\eta n_j^k$ for every
		$S\in\binom{[n_j]}{s_j}$.
		Passing to a subsequence, assume $x_j\to x\in[a,1/f_k]$.
		
		For each positive integer $m$, let $N(m)$ be the threshold supplied by
		\cref{lem:approx-cover} with $\xi=1/f_k$ and $\rho=1/m$.  Since
		$n_j\to\infty$, there are integers
		$m_j\to\infty$ such that $n_j\ge N(m_j)$ after passing to a subsequence.  Apply
		the lemma with $\rho_j=1/m_j$.  We obtain weights $w_j:[n_j]\to[0,1]$ with
		\[
		\frac1{n_j}\sum_vw_j(v)\le x_j+o(1)
		\]
		such that, for
		\[
		\cG_j:=\left\{E\in\binom{[n_j]}k:\sum_{v\in E}w_j(v)\ge1\right\},
		\]
		we have
		\begin{equation}\label{eq:FminusG}
			|\cF_j\setminus\cG_j|=o(n_j^k).
		\end{equation}
		
		Let
		$
		\mu_j:={n_j}^{-1}\sum_{v\in[n_j]}\delta_{w_j(v)}
		$
		be the empirical probability measure of the weights.  Sampling
		$k$ vertices with replacement instead of without replacement changes any event by
		at most $\binom{k}{2}/n_j$.  Therefore
		\begin{equation}\label{eq:G-probability}
			\frac{|\cG_j|}{\binom{n_j}k}
			=\Prob_{\mu_j^{\otimes k}}(Z_1+\cdots+Z_k\ge1)+o(1).
		\end{equation}
		Also, uniformly for $x_j$ in the present compact interval,
		\begin{equation}\label{eq:cover-density}
			\frac{M_k(n_j,s_j)}{\binom{n_j}k}
			=1-\frac{\binom{n_j-s_j}k}{\binom{n_j}k}
			=q_k(x_j)+O_k(n_j^{-1}).
		\end{equation}
		
		Equations \eqref{eq:stab-size} and \eqref{eq:FminusG} give a lower bound
		$q_k(x)-o(1)$ for \eqref{eq:G-probability}.  To obtain the reverse bound, choose a
		subsequence on which the limsup of the probabilities is attained.  Since the
		space of probability measures on $[0,1]$ is weakly compact, pass to a further
		subsequence for which $\mu_j$ converges weakly to a probability measure $\mu$.
		Then $\mu_j^{\otimes k}$ converges weakly to $\mu^{\otimes k}$, and the continuity
		of $z\mapsto z$ on $[0,1]$ gives $\int z\,\dd\mu(z)\le x$.  Because
		$\{z_1+\cdots+z_k\ge1\}$ is closed, the closed-set part of the
		Portmanteau theorem \cite[Theorem~2.1(iii)]{Billingsley1999}, namely
		$\limsup_j\nu_j(F)\le\nu(F)$ whenever $\nu_j\Rightarrow\nu$ and $F$ is
		closed, together with the assumed tail bound, gives
		\[
		\limsup_j\Prob_{\mu_j^{\otimes k}}(Z_1+\cdots+Z_k\ge1)
		\le\Prob_{\mu^{\otimes k}}(Z_1+\cdots+Z_k\ge1)
		\le q_k(x).
		\]
		Consequently $|\cG_j|=(q_k(x_j)+o(1))\binom{n_j}k$.  The lower bound on
		$|\cF_j|$, together with $|\cF_j\setminus\cG_j|=o(n_j^k)$, now implies first that
		$|\cF_j|=M_k(n_j,s_j)+o(n_j^k)$ and then that
		\begin{equation}\label{eq:F-G-close}
			|\cF_j\mathbin\triangle\cG_j|=o(n_j^k).
		\end{equation}
		
		Equation \eqref{eq:G-probability} now shows that the product tail probabilities
		converge to $q_k(x)$.  Hence \cref{lem:compactness-rigidity} gives
		$\mu_j\Rightarrow\mu_x$.  Fix
		$0<\gamma<1/k$ and put
		\begin{align*}
			A_j&:=\{v:w_j(v)>1-\gamma\},\\
			I_j&:=\{v:\gamma<w_j(v)\le1-\gamma\}.
		\end{align*}
		Weak convergence gives
		\[
		|A_j|=xn_j+o(n_j)=s_j+o(n_j),
		\qquad |I_j|=o(n_j).
		\]
		Every edge of $\cG_j$ that avoids $A_j$ must meet $I_j$, since otherwise its
		weight sum is at most $k\gamma<1$.  Hence
		\[
		|\cG_j\setminus\cA(A_j)|
		\le |I_j|\binom{n_j-1}{k-1}=o(n_j^k).
		\]
		Moreover, $|A_j|=s_j+o(n_j)$ and \eqref{eq:cover-density} show that
		\[
		\bigl||\cA(A_j)|-M_k(n_j,s_j)\bigr|
		\le \bigl||A_j|-s_j\bigr|\binom{n_j-1}{k-1}
		=o(n_j^k).
		\]
		Since $|\cG_j|=M_k(n_j,s_j)+o(n_j^k)$, it follows that
		\[
		|\cA(A_j)\setminus\cG_j|
		=|\cA(A_j)|-|\cG_j|+|\cG_j\setminus\cA(A_j)|=o(n_j^k),
		\]
		and hence $|\cG_j\mathbin\triangle\cA(A_j)|=o(n_j^k)$.  Choose an
		$s_j$-set $S_j$ with
		$|A_j\mathbin\triangle S_j|=\bigl||A_j|-s_j\bigr|=o(n_j)$.  The same
		one-vertex estimate gives
		$|\cA(A_j)\mathbin\triangle\cA(S_j)|=o(n_j^k)$.
		Together with \eqref{eq:F-G-close}, this contradicts the assumed inequality
		$|\cF_j\mathbin\triangle\cA(S_j)|>\eta n_j^k$.
	\end{proof}

	\begin{proof}[Proof of \cref{thm:stability}]
		Apply \cref{thm:parameterized-stability} with $f_k=k+1$.  Its
		probabilistic hypothesis follows from \cref{cor:threshold-rigidity}.
	\end{proof}
	
	\section{Proof of the exact bound}\label{sec:exactification}
	
	We first isolate the matching statement used when the ratio $b/N$ is at most a
	constant $\theta_k$ supplied by the lemma below.  Its equality statement will be
	used in the local argument.
	For a $k$-graph $\cJ$, let $\tau(\cJ)$ denote its vertex-cover number, the minimum
	size of a set meeting every edge of $\cJ$.
	
	\begin{lemma}\label{lem:sparse-emc}
		For every fixed $k\ge2$, there are constants $\theta_k>0$ and $N_k$ such that,
		whenever $N\ge N_k$ and $0\le b\le\theta_kN$, every
		$\cJ\subseteq\binom{[N]}k$ with $\nu(\cJ)\le b$ satisfies
		\[
		|\cJ|\le\binom Nk-\binom{N-b}k.
		\]
		Equality holds if and only if
		\[
		\cJ=\cA_{[N]}(C)
		\]
		for some $C\in\binom{[N]}b$.
	\end{lemma}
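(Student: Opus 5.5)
We give a self-contained degree argument in the spirit of Bollob\'as, Daykin and Erd\H{o}s \cite{BollobasDaykinErdos1976}. This regime is also covered by Frankl's theorem \cite{Frankl2013}, since $b\le\theta_kN$ with $\theta_k<1/(2k)$ gives $N\ge(2k-1)b+k$ for large $N$. We do not rely on it, because the direct argument yields the equality case at no extra cost.

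The case $b=0$ is trivial, so assume $b\ge1$. Put
\[
D:=k(b+1)\binom{N-2}{k-2},
\]
and let $B$ be the set of vertices of degree greater than $D$ in $\cJ$. The argument splits into three cases according to $|B|$ versus $b$.

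\emph{Case $|B|\ge b+1$.} Pick $b+1$ vertices of $B$ and choose for them, one at a time, edges containing that vertex and avoiding all vertices used so far. At any stage at most $k(b+1)$ vertices are forbidden. Each forbidden vertex kills at most $\binom{N-2}{k-2}$ edges through the current vertex. Hence a degree above $D$ always leaves an available edge. This produces a matching of size $b+1$, a contradiction.

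\emph{Case $|B|=b$.} Suppose some $E\in\cJ$ avoids $B$. Run the same greedy procedure on the $b$ vertices of $B$, forbidding the $k$ vertices of $E$ together with the vertices already used. This gives $b$ disjoint edges, each through a distinct vertex of $B$ and disjoint from $E$. Adding $E$ gives $\nu(\cJ)\ge b+1$, a contradiction. Hence $\cJ\subseteq\cA_{[N]}(B)$. This gives the bound, with equality exactly when $\cJ=\cA_{[N]}(B)$.

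\emph{Case $|B|<b$.} Put $b':=b-|B|\ge1$ and $\cJ':=\{E\in\cJ:E\cap B=\varnothing\}$. By the same extension argument, a matching of size $b'+1$ in $\cJ'$ could be completed by $|B|$ disjoint edges through the vertices of $B$. So $\nu(\cJ')\le b'$. The vertex set of a maximum matching of $\cJ'$ has at most $kb'$ vertices and meets every edge of $\cJ'$. All these vertices lie outside $B$, so each has degree at most $D$, and therefore
\[
|\cJ'|\le kb'D\le\frac{k^2(b+1)b'}{(k-2)!}N^{k-2}.
\]
On the other hand,
\[
|\cJ|\le|\cA_{[N]}(B)|+|\cJ'|.
\]
Moreover,
\[
\binom Nk-\binom{N-b}k-|\cA_{[N]}(B)|\ge b'\binom{N-b}{k-1}\ge b'\frac{\bigl((1-\theta_k)N-k\bigr)^{k-1}}{(k-1)!}.
\]
It therefore suffices to have
\[
k^2(k-1)(b+1)N^{k-2}<\bigl((1-\theta_k)N-k\bigr)^{k-1}.
\]
Since $b+1\le2\theta_kN$, this holds once $\theta_k$ is chosen so that $2k^2(k-1)\theta_k<(1-\theta_k)^{k-1}/2$, for example $\theta_k=1/(8k^3)$, and $N\ge N_k$ is large. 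In this case the inequality is strict, so equality never occurs here.

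Combining the three cases proves both the bound and the equality characterization. The only delicate point is the bookkeeping in the last case: the low-degree threshold $D$ must be large enough for the greedy extensions to work, yet small relative to $\binom{N-b}{k-1}$. The assumption $b\le\theta_kN$ is used precisely to reconcile these two requirements.
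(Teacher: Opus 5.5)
Your proof is correct, and it takes a genuinely different route from the paper's, which is assembled from cited theorems. For $k=2$ the paper uses Erd\H{o}s--Gallai with its equality case. For $k\ge3$ it gets the numerical bound from Frankl--Kupavskii \cite{FranklKupavskii2022} when $b\ge b_0(k)$, and from Erd\H{o}s \cite{Erdos1965} for the finitely many smaller $b$. It then settles equality in two steps. First it shows $\nu(\cJ)=b$ by applying the bound with $b-1$. Then it invokes the Bollob\'as--Daykin--Erd\H{o}s estimate: for $N>2k^3b$, $\tau(\cJ)>b$ forces $|\cJ|\le\binom Nk-\binom{N-b}k+1-\binom{N-b-k}{k-1}$. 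Hence $\tau(\cJ)=b$ and $\cJ=\cA_{[N]}(C)$.

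You instead reprove the relevant part of Bollob\'as--Daykin--Erd\H{o}s directly. Your trichotomy on the number of vertices of degree above $D=k(b+1)\binom{N-2}{k-2}$ gives the bound and the equality classification in one pass. It works uniformly for all $k\ge2$, needs no separate graph case, and gives the explicit value $\theta_k=1/(8k^3)$. The paper's choice is already forced below $1/(2k^3)$ by the BDE range, so your constant is of the same order. Nothing is lost downstream, where only some fixed $\theta_k>0$ is used. The paper's version buys brevity. The price is relying on Frankl--Kupavskii, which is far more than this sparse regime needs, and on the exact form of the BDE stability clause. Yours is elementary and fully checkable.

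Two small points should be made explicit. In each greedy step, the forbidden set must also contain the not-yet-processed centres; otherwise an early edge could swallow a later centre. With them included, the forbidden set still has size at most $kb\le k(b+1)$ in all three cases, so your threshold $D$ suffices. Also record the converse half of the equality statement: every $\cA_{[N]}(C)$ with $|C|=b$ has $\nu\le b$ and attains the bound. This is immediate.
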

	
	\begin{proof}
		For $k=2$, the Erd\H{o}s--Gallai theorem gives
		\[
		|\cJ|\le
		\max\left\{\binom{2b+1}{2},
		\binom N2-\binom{N-b}{2}\right\}.
		\]
		If $b\le N/5$ and $N\ge5$, then for $b>0$ the second term is strictly larger
		than the first, since $N\ge5b>(5b+3)/2$; the case $b=0$ is trivial.  The
		equality statement in the
		Erd\H{o}s--Gallai theorem therefore shows that equality is possible only for
		the graph consisting of all pairs meeting a fixed $b$-set.  Thus the assertion
		holds for $k=2$.
		
		Suppose henceforth that $k\ge3$.
		Frankl and Kupavskii \cite{FranklKupavskii2022} proved the displayed bound for all
		sufficiently large $b$ (with $k$ fixed) provided
		\[
		N\ge\frac{5k-2}{3}\,b.
		\]
		Choose
		\[
		0<\theta_k<\min\left\{\frac3{5k-2},\frac1{2k^3}\right\}.
		\]
		There is a constant $b_0=b_0(k)$ above which the Frankl--Kupavskii result
		applies.  For each of the finitely many integers $0\le b<b_0$, Erd\H{o}s's
		classical theorem gives the same bound once $N$ is sufficiently large
		\cite{Erdos1965}.  Taking $N_k$ larger than all these thresholds proves the
		numerical assertion.
		
		It remains to characterize all equality cases.  If $b=0$, then
		$\nu(\cJ)=0$ forces $\cJ=\varnothing=\cA_{[N]}(\varnothing)$, so assume
		$b\ge1$ and $|\cJ|=\binom Nk-\binom{N-b}k$.  If
		$\nu(\cJ)\le b-1$, the numerical assertion just proved, applied with parameter
		$b-1$, gives
		\[
		|\cJ|\le\binom Nk-\binom{N-b+1}k
		<\binom Nk-\binom{N-b}k,
		\]
		a contradiction.  Hence $\nu(\cJ)=b$.
		
		Our choice of $\theta_k$ ensures $N>2k^3b$.  The theorem of Bollob\'as,
		Daykin and Erd\H{o}s \cite{BollobasDaykinErdos1976} states in this range that,
		if $\nu(\cJ)\le b$ and no $b$-set meets every edge of $\cJ$, equivalently if
		$\tau(\cJ)>b$, then
		\[
		|\cJ|\le
		\binom Nk-\binom{N-b}k+1-\binom{N-b-k}{k-1}
		<\binom Nk-\binom{N-b}k,
		\]
		where the last inequality holds after increasing $N_k$ if necessary.  Every
		vertex cover must use distinct vertices to meet the $b$ edges of a matching, so
		$\tau(\cJ)\ge\nu(\cJ)=b$.  The preceding strict bound therefore implies
		$\tau(\cJ)=b$.  If $C$ is a vertex cover of size $b$, then
		$\cJ\subseteq\cA_{[N]}(C)$; equality of their sizes forces
		$\cJ=\cA_{[N]}(C)$.  Conversely, every $\cA_{[N]}(C)$ with $|C|=b$ has
		matching number at most $b$ and the displayed number of edges.
	\end{proof}
	
	\begin{lemma}\label{lem:exactification}
		Fix $k\ge2$, a real number $f_k>k$, and $L\ge f_k$.  There are $\delta>0$
		and $s_1$ such that the following
		holds whenever $s\ge s_1$ and
		\[
		f_k s\le n\le Ls.
		\]
		If $\cF\subseteq\binom{[n]}k$, $\nu(\cF)\le s$, and some
		$S\in\binom{[n]}s$ satisfies
		\[
		|\cF\mathbin\triangle\cA(S)|\le\delta n^k,
		\]
		then $|\cF|\le M_k(n,s)$.  Moreover, equality holds if and only if
		$\cF=\cA(S')$ for some $S'\in\binom{[n]}s$.
	\end{lemma}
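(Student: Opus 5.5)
We follow the three-stage scheme announced in the introduction: clean up the approximate center, determine the local family by \cref{lem:sparse-emc}, and rule out any remaining excess with a greedy matching. Throughout, $\delta$ is small compared with constants depending only on $k,f_k,L$. We also use $s\ge n/L$, so that $n$ and $s$ are of the same order.

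\emph{Step 1 (cleaning the center).} For $v\in S$, call $v$ \emph{bad} if it misses more than $\beta n^{k-1}$ of the $k$-sets containing $v$, and put $B\subseteq S$ for the set of bad vertices. Since every such missing set lies in $\cA(S)\setminus\cF$, double counting gives $|B|\le k\delta n^k/(\beta n^{k-1})=O(\delta/\beta)\,n$. Set $S_0:=S\setminus B$, $b:=|B|$ and $U:=[n]\setminus S_0$, so that $|U|=n-s+b$. Choosing $\beta\gg\delta$ makes $b\le\theta_k|U|$, which is the hypothesis of \cref{lem:sparse-emc}.

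\emph{Step 2 (the local family).} Let $\cJ:=\cF[U]$. We claim $\nu(\cJ)\le b$. Suppose $\cJ$ contains a matching $M$ of size $b+1$. We extend $M$ greedily by one edge through each $v\in S_0$, taken from $\cF$ and disjoint from everything chosen so far. At each stage fewer than $k(s+1)$ vertices are occupied. The number of $k$-sets through $v$ that meet a used vertex is then at most $ks\binom{n-2}{k-2}$, and this is a constant fraction of $\binom{n-1}{k-1}$ strictly below $1$, because $n\ge f_ks>ks$. Indeed, the $(k-1)$-sets inside $[n]\setminus(\text{used})$ number about $\binom{n-ks}{k-1}$, and this is $\Omega(n^{k-1})$. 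As $v$ is good, it misses at most $\beta n^{k-1}$ of these sets, so an extension exists once $\beta$ is small relative to $(1-k/f_k)^{k-1}$. The outcome is a matching of size $b+1+(s-b)=s+1$ in $\cF$, a contradiction. Hence \cref{lem:sparse-emc} gives $|\cJ|\le\binom{|U|}k-\binom{|U|-b}k$, with equality only if $\cJ=\cA_U(C)$ for some $b$-set $C$.

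\emph{Step 3 (counting).} Every edge of $\cF$ either meets $S_0$ or lies in $U$. Hence
\[
|\cF|\le\Bigl(\binom nk-\binom{|U|}k\Bigr)-m+|\cJ|,
\]
where $m$ counts the sets meeting $S_0$ that are missing from $\cF$. Plugging in the bound on $|\cJ|$ gives
\[
|\cF|\le\binom nk-\binom{n-s}k-m=M_k(n,s)-m,
\]
because $|U|-b=n-s$. This proves $|\cF|\le M_k(n,s)$. If equality holds, then $m=0$ and $\cJ=\cA_U(C)$, so $\cF=\cA(S_0\cup C)$ with $|S_0\cup C|=s$. The converse direction, that every $\cA(S')$ attains equality, is immediate.

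\emph{Main obstacle.} The delicate point is Step 2. We must ensure the greedy extension still works after vertices of $M$ inside $U$ are used, and the required bound must depend only on the constants $f_k>k$ and $L$ (the latter keeps $n/s$ bounded so that $b=O(\delta/\beta)n\le\theta_k|U|$). The remaining steps are bookkeeping once the parameters are chosen in the order $\beta$ (depending on $f_k$), then $\delta$, then $s_1$ (large enough for $N_k$).
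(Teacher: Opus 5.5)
Your proposal is essentially the paper's proof. In both, the few vertices of $S$ with many missing link sets are moved into the outside part $U$. One then shows $\nu(\cF[U])\le b$ by greedily extending a hypothetical $(b+1)$-matching through the remaining vertices of $S$, and concludes with \cref{lem:sparse-emc} and its equality case.

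Two variations are worth noting. First, your Step~3 count is really the identity
\[
|\cF|=M_k(n,s)-m-\bigl(M_k(|U|,b)-|\cJ|\bigr).
\]
This is a slightly tidier version of the paper's bookkeeping with $g\ge h\ge h_B$. Second, you normalize the bad-vertex threshold by $n^{k-1}$, whereas the paper uses $D=\binom{\lfloor (f_k-k)s/2\rfloor}{k-1}$, which needs $n\le Ls$ to bound $b$. Your version never actually uses $n\le Ls$, since $|U|\ge(1-1/f_k)n$ and $n-ks\ge(1-k/f_k)n$. So your remark that $L$ is what keeps $b\le\theta_k|U|$ is unnecessary.

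Two small repairs are needed in Step~2.

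\emph{The union bound.} The sentence claiming $ks\binom{n-2}{k-2}$ is a fraction of $\binom{n-1}{k-1}$ strictly below $1$ is false for $k\ge3$. The ratio is $k(k-1)s/(n-1)$, which exceeds $1$ for instance when $k=3$ and $n=4s$, a case allowed with $f_k=k+1$. Drop that sentence and rely only on your direct count of $(k-1)$-sets inside the unused vertices.

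\emph{The greedy extension.} Requiring the new edge to be ``disjoint from everything chosen so far'' is not enough. The edge through $v$ could contain a not-yet-processed $v'\in S_0$. Then $v'$ cannot receive its own edge, and the count $(b+1)+(s-b)$ breaks. Choose the $(k-1)$-set inside $U$ minus the used vertices; the paper likewise takes it inside $W=[n]\setminus S$. The number of available vertices is then still at least
\[
|U|-k(b+1)-(k-1)(s-b-1)=n-ks-1\ge(1-k/f_k)n-1.
\]
Hence $\binom{n-ks-1}{k-1}>\beta n^{k-1}$ once $\beta$ is small and $n$ is large, and the extension goes through as you intended.
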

	
	\begin{proof}
		Put $\alpha=f_k-k>0$.  It is enough to consider the case
		$|\cF|\ge M_k(n,s)$.  Let
		$W=[n]\setminus S$, and let
		\begin{align*}
			h&:=|\cA(S)\setminus\cF|,\\
			g&:=|\cF\cap\tbinom Wk|.
		\end{align*}
		Since $|\cF|=M_k(n,s)-h+g$, we have
		\begin{equation}\label{eq:g-at-least-h}
			g\ge h.
		\end{equation}
		
		For $v\in S$, let
		\[
		m(v):=\left|\left\{A\in\binom W{k-1}:\{v\}\cup A\notin\cF\right\}\right|.
		\]
		Set
		\[
		D:=\binom{\lfloor \alpha s/2\rfloor}{k-1},
		\qquad B:=\{v\in S:m(v)\ge D\},
		\qquad b:=|B|.
		\]
		The missing edges counted for different vertices are distinct and all belong to
		$\cA(S)\setminus\cF$, so
		\begin{equation}\label{eq:bD}
			bD\le h\le|\cF\mathbin\triangle\cA(S)|\le\delta n^k.
		\end{equation}
		For all sufficiently large $s$,
		\[
		D=\binom{\lfloor \alpha s/2\rfloor}{k-1}
		\ge c_{k,f_k}s^{k-1},
		\qquad
		c_{k,f_k}:=\left(\frac{\alpha}{3(k-1)}\right)^{k-1}.
		\]
		Indeed, $\lfloor\alpha s/2\rfloor\ge\alpha s/3$ once $s$ is sufficiently
		large, and
		$\binom mr\ge(m/r)^r$ for $m\ge r$.  It follows from \eqref{eq:bD} and
		$n\le Ls$ that
		\[
		b\le \frac{L^k}{c_{k,f_k}}\,\delta s.
		\]
		Choose $\delta>0$ so small that
		\[
		\frac{L^k}{c_{k,f_k}}\delta
		\le\min\left\{\frac{\alpha}{3},(f_k-1)\theta_k\right\}.
		\]
		Since $n-s+b\ge n-s\ge(f_k-1)s$, we then have
		\begin{equation}\label{eq:b-small}
			b\le \alpha s/3
			\quad\text{and}\quad
			b\le\theta_k(n-s+b),
		\end{equation}
		where $\theta_k$ is from \cref{lem:sparse-emc}.  Increase $s_1$ if necessary so
		that $N=n-s+b\ge N_k$, $\alpha s\ge6$, and
		$\lfloor\alpha s/2\rfloor\ge k-1$.
		
		Let $U=W\cup B$ and $Q=S\setminus B$, so that
		$[n]=Q\mathbin{\dot\cup}U$ and $|U|=N$.  We first claim that
		\begin{equation}\label{eq:local-matching}
			\nu(\cF[U])\le b.
		\end{equation}
		Suppose otherwise, and choose a matching $\cM_0\subseteq\cF[U]$ of size
		$b+1$.  We greedily extend $\cM_0$ by one edge of the form
		$\{q\}\cup A_q$, with $A_q\in\binom W{k-1}$, for each $q\in Q$.  At any step,
		at most $|Q|-1=s-b-1$ such edges have previously been selected.  The matching
		$\cM_0$ uses at most $k(b+1)$ vertices of $W$, so the number of currently unused
		vertices of $W$ is at least
		\begin{align*}
			|W|-k(b+1)-(k-1)(s-b-1)
			=n-ks-b-1
			\ge\alpha s-b-1
			\ge\frac{2\alpha s}{3}-1
			\ge\lfloor\alpha s/2\rfloor.
		\end{align*}
		There are therefore at least $D$ candidate $(k-1)$-sets contained in the unused
		part of $W$.  Because $q\notin B$, fewer than $D$ members of
		$\binom W{k-1}$ fail to form an edge with $q$.  Hence at least one candidate gives an
		edge $\{q\}\cup A_q\in\cF$, and the greedy construction can continue.  At the
		end we obtain $(b+1)+(s-b)=s+1$ pairwise disjoint edges of $\cF$, a
		contradiction.  This proves \eqref{eq:local-matching}.
		
		Write
		\[
		h_B:=|\cA_U(B)\setminus\cF[U]|\le h.
		\]
		Since $N-b=n-s$, we have
		\begin{align*}
			|\cF[U]|
			&=\binom Nk-\binom{n-s}k-h_B+g\\
			&=M_k(N,b)-h_B+g
			\ge M_k(N,b),
		\end{align*}
		where the last inequality follows from
		$g\ge h\ge h_B$.  On the other hand, \eqref{eq:local-matching},
		\eqref{eq:b-small}, and \cref{lem:sparse-emc} give
		$|\cF[U]|\le M_k(N,b)$.  Consequently,
		\[
		g=h=h_B,
		\qquad |\cF[U]|=M_k(N,b).
		\]
		The equality statement in \cref{lem:sparse-emc} yields a set
		$B'\in\binom Ub$ such that
		\[
		\cF[U]=\cA_U(B').
		\]
		
		The equality $h=h_B$ shows that every missing member of $\cA(S)$ is contained
		in $U$ and meets $B$.  Hence every $k$-set meeting $Q$ belongs to $\cF$.
		Since $[n]=Q\mathbin{\dot\cup}U$, it follows that
		\[
		\cF
		=\cA_{[n]}(Q)\cup\cF[U]
		=\cA_{[n]}(Q)\cup\cA_U(B')
		=\cA_{[n]}(Q\cup B').
		\]
		Finally, $Q\cap B'=\varnothing$ and
		$|Q\cup B'|=(s-b)+b=s$.  Thus equality forces the asserted form.  Conversely,
		every family $\cA(S')$ with $|S'|=s$ has $M_k(n,s)$ edges and matching number at
		most $s$.
	\end{proof}
	
	The parameterized formulation makes the dependence of the Erd\H{o}s matching conjecture range on the
	probabilistic estimate explicit.
	
	\begin{corollary}\label{cor:parameterized-emc}
		Fix $k\ge2$ and let $f_k\ge\rho_k$.  Suppose that, for every
		$0<x\le1/f_k$, independent, identically distributed random variables
		$Z_1,\ldots,Z_k\in[0,1]$ with $\E Z_i\le x$ satisfy
		\[
		\Prob(Z_1+\cdots+Z_k\ge1)\le q_k(x),
		\]
		with equality if and only if their common law is
		$\mu_x=(1-x)\delta_0+x\delta_1$.  Then
		there exists $s_0=s_0(k,f_k)$ such that the following holds.  If
		$s\ge s_0$, $n\ge f_k s$, and
		$\cF\subseteq\binom{[n]}k$ satisfies $\nu(\cF)\le s$, then
		\[
		|\cF|\le M_k(n,s).
		\]
		Equality holds if and only if $\cF=\cA(S)$ for some
		$S\in\binom{[n]}s$.
	\end{corollary}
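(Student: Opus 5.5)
The plan is to split the range $n\ge f_ks$ into a bounded-ratio part, $f_ks\le n\le Ls$, and a large-ratio part, $n>Ls$, for a suitably large constant $L$. In the bounded part we combine \cref{thm:parameterized-stability} with \cref{lem:exactification}. In the large part we invoke a known result for $n$ large compared with $s$. We may assume $|\cF|\ge M_k(n,s)$, since otherwise there is nothing to prove.

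\textbf{Large-ratio part.} For $n\ge Ls$ with $L$ large (for instance $L=\max\{f_k,(5k-2)/3\}$, or $L=2k^3$), the theorem of Frankl and Kupavskii \cite{FranklKupavskii2022} gives $|\cF|\le M_k(n,s)$ once $s$ is large. For the equality case we can use the argument already written in the proof of \cref{lem:sparse-emc}, applied verbatim with $(N,b)=(n,s)$:
\begin{itemize}
\item $\nu(\cF)=s$ is forced, by comparison with the bound for parameter $s-1$;
\item the Bollob\'as--Daykin--Erd\H{o}s theorem, valid for $n>2k^3s$, shows that $\tau(\cF)>s$ forces $|\cF|<M_k(n,s)$;
\item hence $\cF=\cA(S)$ for some $s$-set $S$.
\end{itemize}
So fix $L:=\max\{2k^3+1,(5k-2)/3,f_k\}$. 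Alternatively, since $s/n\le 1/(2k^3)\le\theta_k$ is not guaranteed, we simply take $L\ge 1/\theta_k$ and apply \cref{lem:sparse-emc} directly with $N=n$, $b=s$.

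\textbf{Bounded-ratio part.} Take $f_ks\le n\le Ls$.
\begin{enumerate}
\item Let $\delta=\delta(k,f_k,L)$ and $s_1$ be given by \cref{lem:exactification}.
\item Apply \cref{thm:parameterized-stability} with $a=1/L$ and $\eta=\delta$. This yields $\eps>0$ and $n_0$. The hypothesis $an\le s\le n/f_k$ holds exactly in this range, and $n\ge f_ks\ge n_0$ once $s$ is large.
\item Since $|\cF|\ge M_k(n,s)\ge M_k(n,s)-\eps n^k$, stability gives an $s$-set $S$ with $|\cF\mathbin\triangle\cA(S)|\le\delta n^k$.
\item \cref{lem:exactification} then gives $|\cF|\le M_k(n,s)$, with equality only for $\cF=\cA(S')$.
\end{enumerate}
Take $s_0$ to be the maximum of all the thresholds involved.

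\textbf{Main obstacle.} The delicate point is the interface: the constant $a$ in the stability theorem must be fixed before $\eps$ and $n_0$ are chosen. This is why we must cap $n/s$ by a constant $L$ chosen in advance, and then cover $n>Ls$ by a separate result whose equality case is known. Using \cref{lem:sparse-emc} with $L=1/\theta_k$ handles both the bound and the equality case in the large part uniformly. The hypothesis $f_k\ge\rho_k$ plays no role beyond ensuring $f_k>k$, which both lemmas require.
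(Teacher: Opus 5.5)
Your proposal is correct and is essentially the paper's proof. The paper fixes $L_k>\max\{f_k,1/\theta_k\}$ and handles $f_ks\le n\le L_ks$ via \cref{thm:parameterized-stability} (with $a=1/L_k$, $\eta=\delta$) followed by \cref{lem:exactification}, and handles $n>L_ks$ by applying \cref{lem:sparse-emc} directly with $(N,b)=(n,s)$. Your initial Frankl--Kupavskii/Bollob\'as--Daykin--Erd\H{o}s variant for the large range also works, but it only unpacks what \cref{lem:sparse-emc} already provides, so it is redundant.
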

	
	\begin{proof}
		Let $\theta_k$ be supplied by \cref{lem:sparse-emc}, and fix
		\[
		L_k>\max\left\{f_k,\frac1{\theta_k}\right\}.
		\]
		Apply \cref{lem:exactification} with the parameters $f_k$ and $L=L_k$,
		obtaining $\delta>0$, and apply \cref{thm:parameterized-stability} with the same $f_k$,
		$a=1/L_k$, and $\eta=\delta$.
		Take $s$ sufficiently large, let $n\ge f_k s$, and suppose that
		$\cF\subseteq\binom{[n]}k$ satisfies $\nu(\cF)\le s$ and
		$|\cF|\ge M_k(n,s)$.
		
		If $n\le L_ks$, then $1/L_k\le s/n\le1/f_k$, so
		\cref{thm:parameterized-stability} gives an $S\in\binom{[n]}s$ such that
		$|\cF\mathbin\triangle\cA(S)|\le\delta n^k$.  The equality-sensitive form of
		\cref{lem:exactification} now shows that
		$|\cF|=M_k(n,s)$ and $\cF=\cA(S')$ for some
		$S'\in\binom{[n]}s$.
		
		If $n>L_ks$, then $s<\theta_kn$.  Applying \cref{lem:sparse-emc} directly
		with $(N,b)=(n,s)$ gives $|\cF|\le M_k(n,s)$, with equality only when
		$\cF=\cA(S')$ for an $s$-set $S'$.  Since we assumed
		$|\cF|\ge M_k(n,s)$, equality holds here as well.  Thus every family of size at
		least $M_k(n,s)$ is one of the asserted cover families.  Taking $s_0(k,f_k)$
		large enough for all the preceding applications proves the result.
	\end{proof}
	
	\begin{proof}[Proof of \cref{thm:main}]
		By \cref{cor:threshold-rigidity}, the tail bound and its Bernoulli equality
		characterization required in \cref{cor:parameterized-emc} hold for
		$f_k=k+1$.  Since
		$k+1>\rho_k$, \cref{cor:parameterized-emc} with $f_k=k+1$ gives the stated
		bound and its equality classification.  The asymptotic comparison defining
		$\rho_k$ shows that, after increasing $s_0(k)$ if necessary, the cover term is
		larger than the clique term at $n=(k+1)s$.  Since
		$M_k(n+1,s)-M_k(n,s)=\binom n{k-1}-\binom{n-s}{k-1}>0$, the same holds
		throughout $n\ge(k+1)s$, so this is precisely the prediction of
		\cref{conj:emc} in the asserted range.
	\end{proof}
	
	\section{Concluding remarks}\label{sec:optimal-conjecture}
	
	The preceding proof isolates a single analytic barrier.  Recall that $\rho_k$ is
	the unique number in $(k,k+1)$ satisfying
$	\rho_k^k-(\rho_k-1)^k=k^k.$
	The following is exactly the cover-dominant part of the i.i.d. maximal-tail
	conjecture of \L{}uczak, Mieczkowska and \v{S}ileikis
	\cite{LuczakMieczkowskaSileikis2017}.  We include the strict equality statement
	because it is the form required by the stability argument.
	
	\begin{conjecture}\label{conj:optimal-tail}
		Let $0<x\le1/\rho_k$, and let $Z_1,\dots,Z_k$ be independent, identically
		distributed random variables in $[0,1]$ with $\E Z_i\le x$.  Then
		\[
		\Prob(Z_1+\cdots+Z_k\ge1)\le q_k(x)=1-(1-x)^k.
		\]
		If $0<x<1/\rho_k$, equality holds if and only if their common law is
		$\mu_x=(1-x)\delta_0+x\delta_1$.
	\end{conjecture}
	
	The interval is best possible.  At $x=1/\rho_k$, the additional law
	\[
	\left(1-\frac{k}{\rho_k}\right)\delta_0
	+\frac{k}{\rho_k}\delta_{1/k}
	\]
	has mean $1/\rho_k$ and tail probability $(k/\rho_k)^k=q_k(1/\rho_k)$.  Thus uniqueness must fail at the endpoint.  For $x>1/\rho_k$, the analogous law with mass $kx$ at $1/k$ has tail probability $(kx)^k>q_k(x)$, while for $x>1/k$ the deterministic law $\delta_{1/k}$ already violates the proposed bound.
	
	More generally, let $f_k\ge\rho_k$.  Whenever the inequality in
	\cref{conj:optimal-tail}, together with Bernoulli uniqueness, is established
	throughout $0<x\le1/f_k$, \cref{cor:parameterized-emc} yields the exact
	cover-side range
	$n\ge f_k s.$
	Thus $f_k=k+1$, supplied by \cref{cor:threshold-rigidity}, is one valid coefficient,
	and every valid $f_k<k+1$ gives a corresponding improvement of
	\cref{thm:main}.  The endpoint $f_k=\rho_k$ cannot itself satisfy the required
	uniqueness, as the second extremal law above shows.  Nevertheless, if
	\cref{conj:optimal-tail} holds, then the hypotheses of
	\cref{cor:parameterized-emc} hold for every $R=\rho_k+\eps$ with $\eps>0$. Thus \cref{conj:optimal-tail} is precisely the missing ingredient for the asymptotically optimal form of our theorem.
	
    \subsection{A numerical improvement}
    The present proof, however, does not provide a route to this conjecture. To see the obstruction, note that \cref{cor:threshold-rigidity} is obtained by scaling $Y_i=Z_i/x$ and setting $T=1/x$.  Its proof passes through \cref{lem:simplex-rigidity}.
	In other words, one would have to replace the hypothesis $T\ge r+1$ in
	\cref{lem:simplex-rigidity} by $T\ge\rho_r$.  This strengthening is false. Indeed, the two configurations corresponding to the cover and clique mechanism satisfy, for $T>r$,
	$K_r(T,0,\ldots,0)=q_{r,T}$ and $K_r(T/r,\ldots,T/r)=\left(\frac rT\right)^r$ respectively.
	They have the same value at $T=\rho_r$.  However, there is a third configuration.
	For
	\[
	y=(1,\ldots,1,T-r+1),
	\quad \text{and }\quad 
	K_r(y)=\frac1{\prod_{i=1}^r y_i}=\frac1{T-r+1}.
	\]
	At $T=\rho_r$ this is strictly larger than the common value of the preceding two configurations:
	\[
	\frac1{\rho_r-r+1}>
	\left(\frac r{\rho_r}\right)^r=q_{r,\rho_r}.
	\]
    Thus the proposed extension of
	\cref{lem:simplex-rigidity} fails precisely near $\rho_r$.

	This failure does not contradict \cref{conj:optimal-tail}. A proof of \cref{conj:optimal-tail} will require a genuinely new ingredient. The conjecture should therefore be viewed as a new problem rather than a routine extension of the method developed here.

    Very recently, Ling proved Samuels' conjecture in full, without requiring the independent summands to be identically distributed \cite{Ling2026}.  For the equal-mean problem relevant here, let $\sigma_k>0$ be determined by
$    \left(1-\frac{1}{k+\sigma_k}\right)^k=\frac{\sigma_k}{1+\sigma_k}.$
    Writing $T=1/x$, Ling's theorem implies the inequality in \cref{conj:optimal-tail} whenever $T\ge k+\sigma_k$, even without the identical-distribution assumption.  A stability analysis of its equality cases further gives some $\varepsilon_k>0$ for which the full conclusion of \cref{conj:optimal-tail}, under the i.i.d.\ assumption, remains valid for $T\ge k+\sigma_k-\varepsilon_k$.  This supplies a smaller coefficient for which the tail bound and its Bernoulli equality characterization hold, so \cref{cor:parameterized-emc} correspondingly lowers the coefficient $k+1$ in \cref{thm:main}.  Nevertheless, the resulting threshold remains strictly larger than $\rho_k$, and hence does not yield the asymptotically sharp range predicted by the Erd\H{o}s matching conjecture. A calculation gives
$	\sigma_5=0.596802\ldots<0.6.$
    Thus, Ling's theorem lowers the coefficient to $k+0.6$ for $k\ge5$.

\section*{Acknowledgement}
The first and third authors gratefully acknowledge the Fourth ECOPRO Student Research
Program, held at the Institute for Basic Science (IBS) in summer 2026,
for its support. The second author would like to thank Ziqing Xiang for informing us that the recent work of Ling~\cite{Ling2026} can be used to obtain the improvement outlined above. The authors acknowledge the use of ChatGPT 5.6 Sol during the
exploratory stage of this project. All mathematical arguments and proofs
presented in the final manuscript were developed and rigorously verified
by the authors. The authors take full responsibility for the content of
the manuscript.


\begin{thebibliography}{99}

		\bibitem{AlonEtAl2012}
		N. Alon, P. Frankl, H. Huang, V. R\"odl, A. Ruci\'nski, and B. Sudakov,
		Large matchings in uniform hypergraphs and the conjectures of Erd\H{o}s and Samuels,
		\emph{J. Combin. Theory Ser. A} \textbf{119} (2012), 1200--1215.

		\bibitem{AlonHuangSudakov2012}
		N. Alon, H. Huang, and B. Sudakov,
		Nonnegative $k$-sums, fractional covers, and probability of small deviations,
		\emph{J. Combin. Theory Ser. B} \textbf{102} (2012), 784--796.
		
		\bibitem{Billingsley1999}
		P. Billingsley,
		\emph{Convergence of Probability Measures}, 2nd ed.,
		Wiley Series in Probability and Statistics, John Wiley \& Sons, New York, 1999.
		
		\bibitem{BollobasDaykinErdos1976}
		B. Bollob\'as, D. E. Daykin, and P. Erd\H{o}s,
		Sets of independent edges of a hypergraph,
		\emph{Quart. J. Math. Oxford Ser. (2)} \textbf{27} (1976), 25--32.
		
		\bibitem{Chung1991}
		F. R. K. Chung,
		Regularity lemmas for hypergraphs and quasi-randomness,
		\emph{Random Structures Algorithms} \textbf{2} (1991), 241--252.
		
		\bibitem{Erdos1965}
		P. Erd\H{o}s,
		A problem on independent $r$-tuples,
		\emph{Ann. Univ. Sci. Budapest. E\"otv\"os Sect. Math.} \textbf{8} (1965), 93--95.
		
		\bibitem{ErdosGallai1959}
		P. Erd\H{o}s and T. Gallai,
		On maximal paths and circuits of graphs,
		\emph{Acta Math. Acad. Sci. Hungar.} \textbf{10} (1959), 337--356.
		
		\bibitem{Frankl2013}
		P. Frankl,
		Improved bounds for Erd\H{o}s' matching conjecture,
		\emph{J. Combin. Theory Ser. A} \textbf{120} (2013), 1068--1072.
		
		\bibitem{Frankl2017Range}
		P. Frankl,
		Proof of the Erd\H{o}s matching conjecture in a new range,
		\emph{Israel J. Math.} \textbf{222} (2017), 421--430.
		
		\bibitem{Frankl2017Triple}
		P. Frankl,
		On the maximum number of edges in a hypergraph with given matching number,
		\emph{Discrete Appl. Math.} \textbf{216} (2017), 562--581.
		
		\bibitem{FranklKupavskii2022}
		P. Frankl and A. Kupavskii,
		The Erd\H{o}s matching conjecture and concentration inequalities,
		\emph{J. Combin. Theory Ser. B} \textbf{157} (2022), 366--400.
		
		\bibitem{FranklLuMaWu2026}
		P. Frankl, H. Lu, J. Ma, and Y. Wu,
		Towards the Erd\H{o}s matching conjecture for $4$-uniform hypergraphs: stability
		and applications,
		arXiv:2602.19230, 2026.
		
		\bibitem{FranklRodl1992}
		P. Frankl and V. R\"odl,
		The uniformity lemma for hypergraphs,
		\emph{Graphs Combin.} \textbf{8} (1992), 309--312.
		
		\bibitem{FranklRodlRucinski2012}
		P. Frankl, V. R\"odl, and A. Ruci\'nski,
		On the maximum number of edges in a triple system not containing a disjoint
		family of a given size,
		\emph{Combin. Probab. Comput.} \textbf{21} (2012), 141--148.
		
		\bibitem{FuEtAl2026}
		W. Fu, Y. Han, G. Wang, J. Yan, P. Zhang, and Z. Zhou,
		Sharp small-deviation inequalities for sums of independent nonnegative random
		variables,
		arXiv:2607.23980v1, 2026.
		
		\bibitem{Gardner2002}
		R. J. Gardner,
		The Brunn--Minkowski inequality,
		\emph{Bull. Amer. Math. Soc. (N.S.)} \textbf{39} (2002), 355--405.
		
		\bibitem{Grunbaum1960}
		B. Gr\"unbaum,
		Partitions of mass-distributions and of convex bodies by hyperplanes,
		\emph{Pacific J. Math.} \textbf{10} (1960), 1257--1261.
		
		\bibitem{HouHuLiu2026}
		J. Hou, C. Hu, and X. Liu,
		A finite-board reduction for the Erd\H{o}s Matching Conjecture and the
		$4$-uniform case via exact certificates,
		arXiv:2605.26060, 2026.
		
		
		\bibitem{KolupaevKupavskii2023}
		D. Kolupaev and A. Kupavskii,
		Erd\H{o}s matching conjecture for almost perfect matchings,
		\emph{Discrete Math.} \textbf{346} (2023), Paper No. 113304.
		
		\bibitem{LuczakMieczkowskaSileikis2017}
		T. \L{}uczak, K. Mieczkowska, and M. \v{S}ileikis,
		On maximal tail probability of sums of nonnegative, independent and identically
		distributed random variables,
		\emph{Statist. Probab. Lett.} \textbf{129} (2017), 12--16.
		
		\bibitem{TanganelliCastrillon2025}
		L. Tanganelli Castrill\'on,
		An improved stability result for Gr\"unbaum's inequality,
		\emph{J. Geom. Anal.} \textbf{35} (2025), Art. No. 248.
		
		\bibitem{VlassisThomas2026}
		N. Vlassis and P. S. Thomas,
		An exact distribution-free test for means of nonnegative random variables,
		arXiv:2607.08415v1, 2026.
		\bibitem{Ling2026}
        Z. Ling,
         On Samuels' conjecture,
        arXiv:2608.18392, 2026.
	\end{thebibliography}
\end{document}